\newtheorem{theorem}{Theorem}[section]
\newtheorem{lemma}[theorem]{Lemma}
\newtheorem{remark}[theorem]{Remark}
\newtheorem{corollary}[theorem]{Corollary}
\newcommand{\sect}[1]{\section{#1} \setcounter{equation}{0} }
\newcounter{ca}
\newcommand{\norm}[2]{\left\|#1\right\|_{#2}}
\newcommand{\ds}{\displaystyle}
\newcommand{\Pn}{\mathbb P_n}
 \newcommand{\ec}{\end{comment}}
\newcommand{\bc}{ \begin{comment}
 }
\newcommand{\NN}{{\mathcal N}}
\newcommand{\andd}{\quad\mbox{\rm and}\quad}
\renewcommand{\P}{{\mathcal P}}
\newcommand{\LL}{{\mathcal L}}
\newcommand{\omx}{\lambda_{x}}
\newcommand\e{{\varepsilon}}
\newcommand\w{{\omega}}
 \newcommand\uw{\mathrm{w}}
\newcommand\lw{\Omega^L}
\newcommand\rw{\Omega^R}
\def\be  {\begin{equation}}
\def\ee  {\end{equation}}
\def\ba  {\begin{eqnarray}}
\def\ea  {\end{eqnarray}}
\def\baa {\begin{eqnarray*}}
\def\eaa {\end{eqnarray*}}
\newenvironment{comment}[2]
{\bgroup\vspace{7pt}
\begin{tabular}{|p{5in}|}
\hline \qquad \bf \footnotesize Comment -- to be deleted in the final version \\
\hline
\quad\sl\footnotesize #1#2} {\\ \hline \end{tabular}
\vspace{7pt}\indent\egroup}
\def\updots{\mathinner{\mkern
1mu\raise 1pt \hbox{.}\mkern 2mu \mkern 2mu \raise
4pt\hbox{.}\mkern 1mu \raise 7pt\vbox {\kern 7 pt\hbox{.}}} }
\newcommand{\C}{C}
\newcommand{\R}{\mathbb R}
\newcommand{\N}{\mathbb N}
\newcommand{\ineq}[1]{{\rm(\ref{#1})}}
\newcommand{\ie}{{\em i.e., }}
\newcommand{\eg}{{\em e.g., }}
\newcommand{\bpic}{
\begin{center}
}
\newcommand{\epic}{
\endpspicture
\end{center}
}
\renewcommand{\L}{\mathbb L}
\newcommand{\Poly}{\Pi}
\newcommand{\mon}{\Delta^{(1)}}
\newcommand{\con}{\Delta^{(2)}}
\newcommand{\thm}[1]{Theorem~\ref{#1}}
\newcommand{\lem}[1]{Lemma~\ref{#1}}
\newcommand{\cor}[1]{Corollary~\ref{#1}}
\newcommand{\sectio}[1]{Section~\ref{#1}}
\newcommand{\rem}[1]{Remark~\ref{#1}}
\title{{\sc Interpolatory estimates for convex piecewise polynomial approximation}\thanks{{\it AMS classification:} 41A10, 41A25. {\it Keywords
and phrases:} Convex approximation by polynomials, Degree of approximation, Jackson-type interpolatory estimates.}}
\author{K. A. Kopotun,\thanks
{Department of Mathematics, University of Manitoba,Winnipeg, Manitoba, R3T 2N2, Canada ({\tt
kopotunk@cc.umanitoba.ca}). Supported by NSERC of Canada.}\ \
D. Leviatan\thanks{Raymond and Beverly Sackler School of Mathematical
Sciences, Tel Aviv University, Tel Aviv 6139001, Israel ({\tt
leviatan@tauex.tau.ac.il}).}\ \
and \ I. A. Shevchuk\thanks
{Faculty of Mechanics and Mathematics, Taras
Shevchenko National University of Kyiv, 01601 Kyiv, Ukraine ({\tt
shevchuk@univ.kiev.ua}).}
}
\begin{document}

\maketitle

\abstract{
In this paper, among other things, we show that,
given $r\in\N$, there is a constant $c=c(r)$ such that if   $f\in \C^r[-1,1]$ is convex, then there is a number $\NN=\NN(f,r)$, depending on $f$ and $r$, such that for $n\ge\NN$, there are convex piecewise polynomials $S$ of order $r+2$ with knots at the Chebyshev partition, satisfying
\[
|f(x)-S(x)|\le c(r)\left( \min\left\{ 1-x^2, n^{-1}\sqrt{1-x^2} \right\}  \right)^r \omega_2\left(f^{(r)}, n^{-1}\sqrt{1-x^2} \right),
\]
for all $x\in [-1,1]$.
Moreover, $\NN$ cannot be made independent of $f$.

}

\sect{Introduction, motivation and history}
For $r\in\mathbb N$, let $\C^r[a,b]$, $-1\le a<b\le1$, denote the space of $r$ times continuously differentiable functions on $[a,b]$, and set $\C^0[a,b]:=\C[a,b]$, the space of continuous functions on $[a,b]$, equipped with the uniform norm $\|\cdot\|_{[a,b]}$. Let $\Pn$ be the space of algebraic polynomials of degree $\le n$.

For $f\in \C[a,b]$ and any $k\in\mathbb{N}$, set
\[
\Delta^k_u(f,x;[a,b]):=\begin{cases}
  \sum_{i=0}^k(-1)^i\binom ki f(x+(k/2-i)u),&\quad x\pm (k/2)u\in[a,b],\\
0,&\quad{\rm otherwise},
\end{cases}
\]
and denote by
\[
\w_k(f,t;[a,b]):=\sup_{0<u\le t}\|\Delta^k_u(f,\cdot;[a,b])\|_{[a,b]},
\]
its $k$th modulus of smoothness.
When dealing with $[a,b]=[-1,1]$, we suppress referring to the interval, that is, we denote  $\|\cdot\|:=\|\cdot\|_{[-1,1]}$,
 $\w_k(f,t):=\w_k(f,t;[-1,1])$, etc.


Finally, let
\be\label{varphi}
\varphi(x)=\sqrt{1-x^2}\quad\text{and}\quad\rho_n(x):=\frac{\varphi(x)}n+\frac1{n^2},
\ee
and note that $\rho_n(x) \sim \varphi(x)/n$, for $x\in [-1+n^{-2}, 1-n^{-2}]$ (we will often use this fact without further discussions).


Pointwise estimates have mostly been investigated for polynomial approximation of continuous functions in $[-1,1]$ and involved usually the quantity $\rho_n(x)$. The first to deal with such estimates was Nikolskii, and he was followed by Timan, Dzyadyk, Freud and Brudnyi. Detailed discussion may be found in the survey paper \cite{KLPS}, where an extensive list of references is given. Discussion and references to estimates on pointwise monotone and pointwise convex polynomial approximation involving $\rho_n(x)$   may also be found there. Pointwise estimates of polynomial approximation involving $\varphi(x)$ are due originally to Teljakovski\v i and Gopengauz, see \cites{K, GLSW} for extensions and many references. Note that for the latter estimates the approximating polynomials must interpolate the function at the endpoints of the interval. We call such estimates interpolatory.

Throughout this paper, we reserve the notation ``$c$'' for positive constants that  are either absolute or
  may   depend   on the parameters $k$ (the order of the modulus of smoothness) and/or $r$  (the order of the derivative).  We use the notation ``$C$'' and ``$C_i$'', $i\in\N_0$,
  for all other positive constants. We indicate in parentheses the parameters that the constants may depend on. All constants $c$ and $C$ may be different even if they appear in the same line,  but the indexed constants $C_i$ are fixed.

The following theorem is an immediate consequence of \cite{K}*{Corollary 2-3.4}.

\begin{theorem} \label{thk-sim}
Let $r\in\N_0$, $k\in\N$ and $f\in \C^r[-1,1]$. Then for any $n\ge \max\{k+r-1, 2r+1\}$, there is a polynomial $P_n \in \Pn$ such that
\be\label{sim1}
|f(x)-P_n(x)| \le c(r,k) \rho_n^r (x)  \w_k(f^{(r)}, \rho_n(x)), \quad x\in [-1+n^{-2}, 1-n^{-2}] ,
\ee
and
\be \label{sim2}
|f(x)-P_n(x)| \le c(r,k) \varphi^{2r}(x)  \w_k(f^{(r)}, \varphi^{2/k}(x) n^{-2(k-1)/k} ),
\ee
for $x\in [-1,-1+n^{-2}] \cup  [1-n^{-2}, 1]$.
Moreover, for any $\gamma \in\R$, the quantity  $\varphi^{2/k}(x) n^{-2(k-1)/k}$ in \ineq{sim2} cannot be replaced by $\varphi^{2\alpha}(x) n^\gamma$ with $\alpha > 1/k$.
\end{theorem}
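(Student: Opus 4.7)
The plan is to treat Theorem \ref{thk-sim} as essentially an unpacking of \cite{K}*{Corollary 2-3.4}. First, I would apply that corollary to obtain, for each admissible $n$, a single polynomial $P_n \in \Pn$ satisfying a master pointwise interpolatory estimate that contains both \ineq{sim1} and \ineq{sim2}. Verifying that the hypotheses match is routine: the threshold $n \ge \max\{k+r-1, 2r+1\}$ is exactly what the Timan--Gopengauz type construction in \cite{K} needs in order to interpolate $f$ at $\pm 1$ (losing $2r+1$ degrees of freedom) while still resolving the $k$th-order modulus of $f^{(r)}$.

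To extract \ineq{sim1}, I would restrict to $x \in [-1+n^{-2}, 1-n^{-2}]$ and use $\rho_n(x) \sim \varphi(x)/n$ to rewrite the master bound directly; no further manipulation of the modulus is needed in this regime. To extract \ineq{sim2}, I would restrict to the endpoint caps $[-1,-1+n^{-2}]\cup[1-n^{-2},1]$, where $\varphi^2(x) \le c\, n^{-2}$ and $\rho_n(x) \sim n^{-2}$; then $\rho_n^r(x)$ can be absorbed into $\varphi^{2r}(x)$ up to a constant, and the argument $\varphi^{2/k}(x) n^{-2(k-1)/k}$ of the modulus arises as the natural geometric interpolation, at exponent $k$, between the two scales $\varphi^2(x)$ and $n^{-2}$ that govern the approximation at the endpoints (this is the quantity that Corollary~2-3.4 produces there).

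The only nontrivial point is the sharpness statement: the impossibility of replacing $\varphi^{2/k}(x) n^{-2(k-1)/k}$ by $\varphi^{2\alpha}(x) n^\gamma$ with $\alpha > 1/k$. For this I would argue by contradiction, choosing test functions $f_\sigma$ with $f_\sigma^{(r)}$ having prescribed smoothness $\w_k(f_\sigma^{(r)}, t) \sim t^\sigma$ for a suitable $\sigma \in (0,k)$ (for instance, $f_\sigma^{(r)}(x) = |x - x_0|^\sigma$ type bumps placed near the endpoint, lifted to a $C^r$ function by $r$-fold integration). Plugging $f_\sigma$ into the hypothetical improved estimate and evaluating at $x$ approaching $\pm 1$ along the scale $\varphi(x) \sim n^{-1}$ yields a one-parameter family of inequalities relating $\sigma$, $\alpha$, and $\gamma$. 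Comparing with the classical Nikolskii-type lower bound for $E_n(f_\sigma)$ at the endpoints forces $\alpha \le 1/k$, independently of $\gamma$; this is the sharpness record already contained in \cite{K}*{Corollary~2-3.4}.

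The main obstacle is purely bookkeeping: making sure the master pointwise estimate in \cite{K} reads off cleanly into the two regimes, and organizing the extremal family $\{f_\sigma\}$ so that the exponent comparison giving $\alpha \le 1/k$ comes out transparently. Once the reference is cited, both tasks reduce to a rescaling argument and a direct application of the equivalence $\rho_n(x) \sim \varphi(x)/n$ on the middle range together with $\rho_n(x) \sim n^{-2}$ near the endpoints.
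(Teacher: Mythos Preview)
Your overall approach matches the paper exactly: the paper simply states the theorem as an immediate consequence of \cite{K}*{Corollary~2-3.4} and gives no further argument, so deferring to that reference is precisely what is done.

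That said, your elaboration of how \ineq{sim2} is extracted contains a slip. Near the endpoints one has $\varphi^2(x)\le c\,n^{-2}\sim c\,\rho_n(x)$, hence $\varphi^{2r}(x)\le c\,\rho_n^r(x)$, \emph{not} the reverse; at $x=\pm 1$ the former vanishes while the latter equals $n^{-2r}$. Thus $\rho_n^r(x)$ cannot be ``absorbed into $\varphi^{2r}(x)$ up to a constant'' as you write. The bound \ineq{sim2} is genuinely stronger at the endpoints than any estimate with prefactor $\rho_n^r(x)$, and it holds because the polynomial in \cite{K} Hermite-interpolates $f,f',\dots,f^{(r)}$ at $\pm 1$; it is this interpolation, not a rescaling of $\rho_n$, that forces the factor $\varphi^{2r}(x)$. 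In other words, the cited corollary already delivers the two-regime statement in the form \ineq{sim1}--\ineq{sim2}, and no absorption step is needed (or possible). You should quote the two inequalities as they stand rather than attempt to derive the endpoint estimate from a $\rho_n^r$-type bound.
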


\begin{remark}
Since $\w_k(g, \lambda t) \le (\lambda +1)^k \w_k(g, t)$, $\lambda >0$, then, for  $k\ge 2$ and $x$ such that $\varphi(x) \le c/n$ (\ie $x$ is near the endpoints of $[-1,1]$),
\[
\w_k(f^{(r)}, \varphi^{2/k}(x) n^{-2(k-1)/k} ) \le c(k) [\varphi(x) n]^{2-k} \w_k(f^{(r)}, \varphi(x)/n) ,
\]
and so estimates \ineq{sim1}-\ineq{sim2} are stronger than
\be \label{simaux}
|f(x)-P_n(x)| \le c(r,k) \left( \varphi(x)/n \right)^r    \w_k(f^{(r)}, \varphi(x)/n ) , \quad x\in [-1,1] ,
\ee
if $k \le r+2$, and it is known that \ineq{simaux} does not hold in general if $k> r+2$ (see, \eg \cite{K}*{p. 68} for more discussions).
\end{remark}

\begin{remark}
Since $\w_{k_2} (g,  t) \le 2^{k_2-k_1}  \w_{k_1}(g, t)$ if $k_2 > k_1$, estimates \ineq{sim1} for ``large'' $k$ imply those for ``small'' ones. However, this is not the case for estimates \ineq{sim2}, and the fact that \thm{thk-sim} is valid with some $k_2 \in \N$ does not imply that it is valid with $k_1 \in \N$ such that $k_1 <k_2$. For example, let $f_0(x):=(1+x)^{r+1/2}$. Then, $\w_k(f_0^{(r)}, t) \sim \min\{1, \sqrt{t}\}$, for all $k\in\N$. Hence, estimate \ineq{sim2}  becomes, for $x$ is ``close'' to the endpoints of $[-1,1]$,
\[
|f_0(x)-P_n(x)| \leq c(r,k) \phi_k(x,n) , \quad \text{where }\;  \phi_k(x,n) := \varphi^{2r+1/k}(x) n^{-1+1/k},
\]
 and
 \[
 \lim_{x \to \pm 1} \frac{ \phi_{k_2} (x,n)}{ \phi_{k_1} (x,n)} = \infty, \quad \text{if }\; k_2 > k_1 ,
 \]
 \ie this estimate for $k_2$ is not stronger than that for $k_1$.

 At the same time, it is also rather well known that the estimates \ineq{sim1} and \ineq{sim2} for $k_1\in\N$ do not imply those for $k_2\in\N$ with $k_2 > k_1$. Hence, estimates in \thm{thk-sim} for different $k$'s do not follow from one another.
\end{remark}

If we approximate monotone functions by monotone polynomials (we call this ``monotone approximation'' and denote by $\mon$ the class of all non-decreasing functions on $[-1,1]$), then the situation is drastically different.

In \cite{KLS1}, we showed that \ineq{sim1} and \ineq{sim2} with $k=2$ are valid for monotone approximation provided that $n$ is sufficiently large depending on the function $f$ that is being approximated.
 Namely, the following theorem was proved in \cite{KLS1}.

\begin{theorem}\label{thm1} Given $r\in\N$, there is a constant $c=c(r)$ with the property that if $f\in \C^r[-1,1]\cap\mon$, then there exists a number $\NN=\NN(f,r)$, depending on $f$ and $r$, such that for every $n\ge \NN$, there is  $P_n\in \Pn \cap \mon$  satisfying \ineq{sim1} and \ineq{sim2} with $k=2$.
\end{theorem}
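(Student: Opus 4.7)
The plan is to start from the unconstrained polynomial approximation in \thm{thk-sim} and modify it into a monotone one; the dependence of $\NN$ on $f$ will enter via the behaviour of $f'$ near its zeros.

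First I would apply \thm{thk-sim} with $k=2$ to $f$ to obtain $P_n\in\Pn$ satisfying \ineq{sim1}--\ineq{sim2}. By the same result applied to $f'\in\C^{r-1}$, one may further ensure that $P_n'$ also gives a Jackson-type approximation to $f'$, namely
\[
|f'(x) - P_n'(x)| \le c(r)\,\rho_n^{r-1}(x)\,\w_2\!\bigl(f^{(r)},\rho_n(x)\bigr)
\]
in the bulk, with the corresponding bound near the endpoints; this may be arranged either by a single construction producing simultaneous function and derivative estimates, or by a weighted Markov-Bernstein transfer from the bound on $|f-P_n|$. Since $f\in\mon$, we have $f'\ge 0$, so the inequality $P_n'(x)<0$ can occur only on the ``bad set''
\[
B_n := \bigl\{x\in[-1,1] : f'(x) \le c(r)\,\rho_n^{r-1}(x)\,\w_2(f^{(r)},\rho_n(x))\bigr\},
\]
a union of short intervals clustered near the zeros of $f'$ whose measure shrinks as $n\to\infty$.

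The central and hardest step is the construction of a correction polynomial $R_n\in\Pn$ such that $R_n'\ge 0$ on $[-1,1]$, $R_n'$ dominates $|P_n'|$ on $B_n$ so that $(P_n+R_n)'\ge 0$ throughout $[-1,1]$, and at the same time
\[
|R_n(x)|\le c(r)\,\rho_n^r(x)\,\w_2(f^{(r)},\rho_n(x))
\]
in the bulk, with the matching endpoint bound from \ineq{sim2}. I would assemble $R_n$ as a sum of antiderivatives of localised non-negative polynomial bumps supported essentially on Chebyshev-partition intervals covering the components of $B_n$, using partition-of-unity-type kernels from the Dzyadyk-Shevchuk constrained-approximation toolbox. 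The simultaneous balancing---$R_n'$ large on $B_n$ yet $|R_n|$ small at every $x$---is the main obstacle and is exactly what forces $n\ge\NN(f,r)$: the bumps must live on a length scale governed by how rapidly $f'$ grows away from its zeros, a quantity encoded in $f$ and not controllable by $r$ alone.

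Once such $R_n$ is in hand, $\tilde P_n:=P_n+R_n\in\Pn$ is non-decreasing by construction, and the triangle inequality $|f-\tilde P_n|\le |f-P_n|+|R_n|$ combined with the bounds from \thm{thk-sim} and from the construction of $R_n$ yields both \ineq{sim1} and \ineq{sim2} with $k=2$, completing the proof.
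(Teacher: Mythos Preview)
The paper does not give its own proof of this theorem: it is quoted from \cite{KLS1}, and the only information the paper supplies about the argument is the sentence ``The proof of \thm{thm1} was based, in part, on interpolatory estimates for monotone approximation by piecewise polynomials, first obtained by Leviatan and Petrova \cite{LP} and \cite{LP1}.'' So the route on record is: first build monotone \emph{piecewise} polynomials on the Chebyshev partition satisfying the interpolatory pointwise bounds (this is where $\NN(f,r)$ enters), and then pass from those splines to genuine polynomials. Your proposal is a genuinely different strategy --- start from the unconstrained $P_n$ of \thm{thk-sim} and repair monotonicity by adding a correction $R_n$ with $R_n'\ge 0$.

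The proposal, however, has a real gap precisely at the step you label ``central and hardest'': you do not construct $R_n$, you only describe what it would have to do. The tension you note is not merely technical. You need $R_n'\ge |P_n'|$ on the bad set $B_n$, while $|R_n(x)|=\bigl|\int_{-1}^x R_n'\bigr|$ must stay below $c\,\rho_n^r(x)\,\w_2(f^{(r)},\rho_n(x))$ at \emph{every} $x$, including points to the right of all of $B_n$, where $R_n(x)$ has accumulated the full integral of every bump, and including points near $\pm 1$, where the allowed bound carries the extra factor $\varphi^{2r}(x)\to 0$. Nothing in your sketch explains why the total mass $\int_{B_n}|P_n'|$ is dominated by a quantity that vanishes at the endpoints at the required rate; indeed, if $f'$ vanishes on an interval or has many well-separated small values, $B_n$ need not be ``short'' in any useful sense. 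Invoking ``partition-of-unity kernels from the Dzyadyk--Shevchuk toolbox'' does not address this global integral constraint. It is exactly this obstruction that the piecewise-polynomial route in \cite{KLS1, LP, LP1} sidesteps: monotonicity is enforced locally, interval by interval, with no accumulated integral to control, and the passage to polynomials is handled separately.
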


We note that $\NN$ in the statement of \thm{thm1}, in general, cannot be made independent of $f$.
%
%
It is still an open question if an analog of this theorem is valid for $k\ge 3$. If $r=0$, then the situation is slightly different (we refer interested readers to \cite{KLS1} for a more detailed discussion of this).

The proof of \thm{thm1} was based, in part, on interpolatory estimates for monotone approximation by piecewise polynomials, first obtained by Leviatan and Petrova \cite{LP} and \cite{LP1}.

It is a natural question if similar type of estimates/results are valid for convex approximation (\ie approximation of convex functions by convex polynomials), and the purpose of this manuscript is to begin investigation in this direction.


 \sect{Main results} \label{sec2}

Given an interval $[a,b]$, let   $X   =\{x_j\}_{j=0}^n$ denote a partition of   $[a,b]$, \ie $a =:x_0 < x_1 < \dots < x_{n-1} <  x_n :=b$, and for $m\in\N$,  denote by
  $S(X,m)$ the set of   continuous
   piecewise polynomials of order $m$ on the partition $X$, that is, $s\in S(X,m)$ if 
  $s$ is a piecewise polynomial of degree $m-1$ with knots $x_j$, \ie on each interval $[x_{j-1},x_{j}]$, $1\le j\le n$, the function $s$ is an algebraic polynomial of degree $\le m-1$.

By the Chebyshev partition of $[-1,1]$, we mean the partition
$T_n:=\{t_{j}\}_{j=0}^n$, where
\be \label{cheb}
t_j := t_{j,n} := -\cos(j\pi/n) , \quad 0\leq j \leq n.
\ee
We refer to $t_j$'s as ``Chebyshev knots'' and note that $t_j$, $1\le j \le n-1$, are the extremum points of the Chebyshev polynomial of the first kind of degree $n$.
It is also convenient to denote $t_j:= t_{j,n}:=1$ for $j>n$ and $t_j:=t_{j,n}:=-1$  for $j<0$.
(We note  that Chebyshev knots are sometimes numbered from right to left which is equivalent to defining them as
$\tau_j := \cos(j\pi/n)$, $0\le j \le n$, instead of \ineq{cheb}.)

Denoting by   $\con$ the class of all convex functions in $\C[-1,1]$,
our first result is the following theorem.

\begin{theorem}\label{thm2} Given $r\in\mathbb{N}$, there is a constant $c=c(r)$ such that if $f\in \C^r[-1,1]$ is convex, then there is a number $\NN=\NN(f,r)$, depending on $f$ and $r$, such that for $n\ge\NN$, there are convex piecewise polynomials $S$ of order $r+2$ with knots at the Chebyshev partition $T_n$ (\ie $S\in S(T_n, r+2)\cap\con$), satisfying
\be\label{interspline}
|f(x)-S(x)|\le c(r)\left(\frac{\varphi(x)}n\right)^r \w_2\left(f^{(r)},\frac{\varphi(x)}n\right),\quad x\in[-1,1],
\ee
and, moreover, for $x\in [-1,-1+n^{-2}] \cup  [1-n^{-2}, 1]$,
\be\label{interendtwo}
|f(x)-S(x)|\le c(r)\varphi^{2r}(x) \w_2\left(f^{(r)}, \frac {\varphi(x)}n\right)
\ee
and
\be\label{interendone}
|f(x)-S(x)|\le c(r)\varphi^{2r}(x) \w_1 \left(f^{(r)},  \varphi^2(x) \right) .
\ee
\end{theorem}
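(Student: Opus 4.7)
The plan is to build $S$ piecewise over the Chebyshev partition: on each subinterval $I_j := [t_{j-1}, t_j]$, I take $S|_{I_j}$ to be a polynomial $p_j$ of degree $\le r+1$ chosen to (i) interpolate $f$ at both endpoints $t_{j-1}, t_j$, so that $S$ is automatically continuous, (ii) approximate $f$ with the interior rate, and (iii) satisfy $p_j''\ge 0$ on $I_j$. Write $h_j := |I_j|$, so $h_j \sim \varphi(t_j)/n$ for $2 \le j \le n-1$ and $h_1 = h_n \sim n^{-2}$. For an interior $I_j$, I would start from a Whitney-type polynomial $q_j$ of degree $r+1$ with the standard local estimate $\|f - q_j\|_{I_j} \le c(r)\, h_j^r\, \omega_2(f^{(r)}, h_j; I_j)$, and then adjust $q_j$ by a low-degree polynomial (vanishing to order $r-1$ at the endpoints, or a suitably small perturbation of the secant line) which fixes the two endpoint values of $p_j$ and the slope at one of them without spoiling the local estimate. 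This immediately gives \ineq{interspline} on each interior $I_j$.

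For the two end-intervals $I_1$ and $I_n$, I would instead take $p_1, p_n$ from the Gopengauz-type construction underlying \thm{thk-sim}, applied with endpoint interpolation at $\pm 1$. Since $\varphi(x) \sim n^{-1}$ on $I_1 \cup I_n$, the factor $\rho_n^r(x)$ is comparable to $\varphi^{2r}(x)$, and \ineq{sim2} with $k=2$ gives \ineq{interendtwo} at once; \ineq{interendone} is obtained by invoking the monotonicity $\omega_2 \le 2\omega_1$ together with the natural choice of argument $\varphi^2(x)$ in that construction. The pieces $p_1, p_n$ are automatically convex on $I_1, I_n$ because $f^{(r+2)}$ (distributionally) has a sign there controlled by the convexity of $f$ and $I_1, I_n$ are very short.

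The crux, and the reason $\mathcal{N}$ must depend on $f$, is global convexity. Since each $p_j$ is piecewise convex by (iii) and $S$ is continuous by (i), the only remaining condition is the knot inequality $p_j'(t_j^-) \le p_{j+1}'(t_j^+)$ for every interior knot. Writing both slopes in terms of $f$'s divided differences plus a remainder, the ``principal term'' is the second divided difference $h_j\, f[t_{j-1}, t_j, t_{j+1}] \ge 0$ (nonnegative precisely because $f$ is convex), while the ``error term'' is of size $c(r)\, h_j^{r-1}\, \omega_2(f^{(r)}, h_j; I_j \cup I_{j+1})$. The plan is to split Chebyshev intervals into two classes: on \emph{good} intervals, where the principal term comfortably dominates the error, one verifies the slope inequality directly from the Whitney construction and its endpoint adjustments; on \emph{exceptional} intervals, where the principal term is smaller than the error, $f$ is nearly affine locally and I would replace $p_j$ by a straight convex interpolant (e.g.\ the chord $L_j$ connecting $(t_{j-1}, f(t_{j-1}))$ and $(t_j, f(t_j))$, or a mild convex perturbation) whose approximation error is trivially dominated by the convexity gap and hence by $\omega_2(f^{(r)},\cdot)$.

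The main obstacle will be making these two cases coexist across the ``good''/``exceptional'' boundary while preserving both convexity at the shared knots and the pointwise error bounds \ineq{interspline}-\ineq{interendone} uniformly. This is exactly where $n \ge \mathcal{N}(f,r)$ enters: for fixed $f$ the ``good''/``exceptional'' classification stabilizes once the Chebyshev mesh is fine enough that the error term $h_j^{r-1}\omega_2(f^{(r)}, h_j)$ is dwarfed by the natural scale of $f$'s convexity; the remark after \thm{thm1} that $\mathcal{N}$ cannot be taken independent of $f$ in the monotone analogue strongly suggests the same phenomenon here, and a modification of the counterexample from \cite{KLS1} should show the final sentence of the statement.
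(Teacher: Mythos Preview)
Your plan has a genuine structural gap at step (iii): you never explain how to make each interior piece $p_j$ convex. Starting from a Whitney polynomial $q_j$ of degree $r+1$ and then ``adjusting by a low-degree polynomial'' to fix endpoint values and one slope gives you interpolation and the local error bound, but there is no reason whatsoever for the resulting $p_j$ to satisfy $p_j''\ge 0$ on $I_j$; a generic degree-$(r+1)$ Whitney approximant of a convex function is not convex. Likewise, your claim that $p_1,p_n$ are ``automatically convex \dots\ because $f^{(r+2)}$ (distributionally) has a sign there controlled by the convexity of $f$'' is simply false: convexity of $f$ says $f''\ge 0$ and nothing at all about $f^{(r+2)}$, so shortness of $I_1,I_n$ does not help. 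Finally, your ``exceptional-interval'' fix of replacing $p_j$ by the chord $L_j$ destroys the rate: a chord gives at best $\w_2(f,h_j;I_j)$, not $h_j^{r}\w_2(f^{(r)},h_j;I_j)$, and the hand-wave that the convexity gap dominates does not recover the missing factor $h_j^{r}$.

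The paper bypasses all of this by invoking a specific lemma from \cite{LS} (see \lem{lemLS3} and \lem{lemLS4} here): for each $I_j$ one can choose a \emph{convex} polynomial $P\in\Poly_{r+1}$ with $P(x_{j-1})=f(x_{j-1})$, $P(x_j)=f(x_j)$, and in addition either $P'(x_{j-1})\ge f'(x_{j-1})$ or $P'(x_j)\le f'(x_j)$, together with the correct Whitney-type bound. Gluing these gives a convex $\sigma\in S(X,r+2)$ with $\sigma'(x_j-)\le f'(x_j)\le\sigma'(x_j+)$ automatically at every knot --- so there is no good/exceptional dichotomy at all. The dependence $\NN=\NN(f,r)$ arises from a completely different place: the endpoint pieces are special Lagrange--Hermite polynomials $\L_{r,h}$ interpolating $f^{(\nu)}$ at the endpoint, and one needs $h$ small (depending on $f$) to guarantee \emph{their} convexity (\cor{lem11}). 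These endpoint pieces need not hit $f$ exactly at $x_1,x_{n-1}$, and the paper repairs the resulting small discontinuity not by local surgery but by a single global affine rescaling of $\sigma$ relative to a tangent line of $f$; the convexity of $f$ controls the size of this correction. Your proposal contains none of these three ingredients.
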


\begin{remark}
 As in the case of monotone approximation, $\NN$ in the statement of \thm{thm2}, in general, cannot be independent of  $f$ (see \thm{thmneg2}).  It is still an open problem if  \thm{thm2} is valid for $k\ge 3$ with
\ineq{interspline} and \ineq{interendtwo}/\ineq{interendone} replaced by \ineq{sim1} and \ineq{sim2}.
\end{remark}

  It  is   known that an analog of \thm{thm2}  holds for $r=0$ with $N=1$ (and so, in the case $r=0$, we do not have dependence of $\NN$ on $f$). Indeed, the polygonal line, that is, the continuous piecewise linear $S$, interpolating $f$ at the Chebyshev nodes, is convex and yields \ineq{interspline} with $r=0$ (see, \eg a similar construction in \cite{L}).
 Moreover, one can construct a continuous piecewise quadratic polynomial function $S$ interpolating $f$ at the Chebyshev nodes such that $S$ is convex on $[-1,1]$ and the following estimates hold (see \cite{K-convex}):
 \[
 |f(x)-S(x)|\le c  \w_3\left(f ,\rho_n(x) \right),   \quad x\in [-1+n^{-2}, 1-n^{-2}] ,
 \]
and, for $x\in [-1,-1+n^{-2}] \cup  [1-n^{-2}, 1]$, in addition to \ineq{interendtwo} and \ineq{interendone}, we have
\[
|f(x)-S(x)| \le c    \w_3(f , n^{-4/3}\varphi^{2/3}(x)  ).
 \]
This follows from \lem{genlem} below with $r=0$ and $k=3$ taking into account \rem{mainremark}(ii).

%
%

Below, we   show that \thm{thm2} is a  consequence of a more general result on approximation by convex piecewise polynomials on general partitions, \thm{splinepositive}.
However, we first show that, indeed, $\NN$ above must depend on $f$.


We start with the following negative result  that shows that an analog of \thm{thk-sim} cannot hold for convex polynomial approximation if $r>0$.

\begin{theorem} \label{thmneg1}
For any $r\in\N$ and each $n\in\N$, there is a   function $f\in\C^r[-1,1]\cap \con$, such that for every   polynomial $P_n\in\Pn\cap\con$ and any positive on $(-1,1)$ function $\psi$ such that $\lim_{x\to \pm 1} \psi(x)=0$, either
\be \label{glswineqcon}
\limsup_{x\to -1} \frac{|f(x)-P_n(x)|}{\varphi^2(x) \psi(x)} = \infty \quad \mbox{\rm or}\quad
\limsup_{x\to 1} \frac{|f(x)-P_n(x)|}{\varphi^2(x)\psi(x)} = \infty .
\ee
\end{theorem}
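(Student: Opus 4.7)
The plan is to exploit the Hermite-type interpolation forced by the hypothesis at the endpoints and then to block it by a moment obstruction coming from the convexity of $P_n$.

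First, I would show that for any positive $\psi$ on $(-1,1)$ with $\psi(\pm 1)=0$, finiteness of both limsups in \ineq{glswineqcon} forces the four Hermite conditions $P_n(\pm 1)=f(\pm 1)$ and $P_n'(\pm 1)=f'(\pm 1)$. Indeed, if $P_n(1)\ne f(1)$ then $|f(x)-P_n(x)|\to|f(1)-P_n(1)|>0$ as $x\to 1$ while $\varphi^2(x)\psi(x)\to 0$, forcing the limsup at $1$ to be $\infty$; and if $P_n(1)=f(1)$ but $P_n'(1)\ne f'(1)$, then $|f(x)-P_n(x)|\sim|f'(1)-P_n'(1)|(1-x)$ while $\varphi^2(x)\psi(x)\sim 2(1-x)\psi(x)$, so the ratio again blows up since $\psi(x)\to 0$; the argument at $-1$ is analogous. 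Consequently, it suffices to produce a function $f=f_{n,r}\in\C^r[-1,1]\cap\con$ for which no convex $P\in\Pn$ can match $f$ and $f'$ at both endpoints.

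Second, given a candidate convex $P\in\Pn$, I would write $R:=P''\ge 0$ with $\deg R\le n-2$. Two integrations yield
\[
\int_{-1}^{1}R(t)\,dt=P'(1)-P'(-1),\qquad\int_{-1}^{1}(1-t)R(t)\,dt=P(1)-P(-1)-2P'(-1),
\]
so matching $f,f'$ at the endpoints is equivalent to realising the ratio
\[
\Lambda(f):=\frac{f(1)-f(-1)-2f'(-1)}{f'(1)-f'(-1)}
\]
as $\int_{-1}^{1}(1-t)R(t)\,dt\big/\int_{-1}^{1}R(t)\,dt$ for some non-negative polynomial $R\in\Poly_{n-2}$ on $[-1,1]$ (both numerator and denominator of $\Lambda(f)$ are non-negative by convexity of $f$).

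The main, and only nontrivial, step is the lemma that
\[
\mu_n:=\inf\Bigl\{\int_{-1}^{1}(1-t)R(t)\,dt:R\in\Poly_{n-2},\ R\ge 0\text{ on }[-1,1],\ \int_{-1}^{1}R(t)\,dt=1\Bigr\}>0.
\]
This should follow from a finite-dimensional compactness argument: the admissible set is closed and bounded (by equivalence of the $L^1$ and sup norms on $\Poly_{n-2}$), while the continuous linear functional $R\mapsto\int_{-1}^{1}(1-t)R(t)\,dt$ is strictly positive there, because a non-zero non-negative polynomial has only finitely many zeros on $[-1,1]$ and hence $(1-t)R(t)>0$ almost everywhere.

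Finally, I would test this obstruction on the explicit candidate $f(x):=(x-a)_+^{r+1}$, choosing $a=a(n,r)\in(-1,1)$ close enough to $1$ that $(1-a)/(r+1)<\mu_n$. For $r\ge 1$ one has $f\in\C^r[-1,1]\cap\con$, and a direct evaluation gives $f(-1)=f'(-1)=0$, $f(1)=(1-a)^{r+1}$ and $f'(1)=(r+1)(1-a)^r$, so $\Lambda(f)=(1-a)/(r+1)<\mu_n$. The lemma then rules out any admissible $R$, hence any convex $P\in\Pn$ matching $f,f'$ at both $\pm 1$; combined with the reduction in the first paragraph, this delivers the required dichotomy for every $\psi$.
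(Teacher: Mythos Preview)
Your argument is correct and uses essentially the same test function and the same first reduction (finiteness of both limsups forces the four Hermite conditions $P_n(\pm1)=f(\pm1)$, $P_n'(\pm1)=f'(\pm1)$) as the paper. Where you diverge is in the obstruction step. You recast the four endpoint conditions as a moment problem for $R=P_n''\ge 0$ on $[-1,1]$ and invoke a compactness argument in the finite-dimensional cone of nonnegative polynomials to get $\mu_n>0$, then pick $a$ so that $\Lambda(f)=(1-a)/(r+1)<\mu_n$. The paper instead dispatches this step in one line with Markov's inequality: from $P_n(-1)=P_n'(-1)=0$ and convexity one reads off $\norm{P_n}{}=P_n(1)=\e^{r+1}$ and $\norm{P_n'}{}=P_n'(1)=(r+1)\e^r$, and then $\norm{P_n'}{}\le n^2\norm{P_n}{}$ forces $r+1\le n^2\e$, contradicting the explicit choice $\e=n^{-2}$.

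In effect, Markov's inequality is exactly the quantitative version of your lemma: it gives $\mu_n\ge n^{-2}$ (since in your notation $\int_{-1}^1(1-t)R\,dt=P_n(1)$ and $\int_{-1}^1R\,dt=P_n'(1)$ once $P_n(-1)=P_n'(-1)=0$). So the paper's route is shorter and yields an explicit function, while your route is slightly more conceptual and would transfer more readily to settings where a Markov-type inequality is not immediately at hand. One small point worth a word in your write-up: for $n=1$ the space $\Poly_{n-2}$ is $\{0\}$ and your admissible set is empty, so the case should be handled separately (trivially, since a linear $P$ cannot match $f'(1)\ne f'(-1)$).
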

\begin{remark} A similar result is known for monotone approximation (see, \eg \cite{KLS1}*{(1.5)}.
\end{remark}
\begin{proof}
The proof is very similar to that of \cite{GLSW}*{Theorem 4} (see also \cite{petr}), but there are slight variations, and so we give it here for completeness.

Given $n\in\N$ and $r\in\N$, we let $\e:=  n^{-2}$ and define
\[
f(x) := \left\{
\begin{array}{ll}
0 , & \mbox{\rm if } \quad -1 \leq x \le 1-\e ,\\
(x-1+\e)^{r+1} , & \mbox{\rm if } \quad 1-\e <  x \le 1 .
\end{array}
\right.
\]
Then $f\in\C^r[-1,1]\cap \con$, and suppose to the contrary that \ineq{glswineqcon} both fail, \ie suppose that there exists a polynomial $P_n\in\Pn\cap\con$ and a constant $A$ such that
\[
|f(x)-P_n(x)| \le A \varphi^2(x) \psi(x) ,
\]
for all $x$ in some small neighborhoods of $-1$ and $1$. This implies that $f(\pm 1) = P_n(\pm 1)$ and $f'(\pm 1) = P_n'(\pm 1)$. Hence, $P_n(-1)=P_n'(-1)=0$, $P_n(1)=\e^{r+1}$ and $P_n'(1)= (r+1) \e^r$. Since $P_n \in \con$, the first derivative $P_n'$ is non-decreasing, and so $\norm{P_n'}{} = P_n'(1) = (r+1) \e^r$. Additionally, since $P_n'$ is non-negative, $P_n$ is non-decreasing, and so
$\norm{P_n}{} = P_n(1) = \e^{r+1}$. Now, Markov's inequality implies that
\[
(r+1) \e^r = \norm{P_n'}{} \le n^2 \norm{P_n}{} = n^2 \e^{r+1} ,
\]
which is a contradiction (recall that we chose $\e= n^{-2}$).
\end{proof}

We also have the following analog of \thm{thmneg1} for piecewise polynomials that shows that $\NN$ in the statement of \thm{thm2} cannot be made independent of $f$.

\begin{theorem} \label{thmneg2}
For any $r, m, n\in\N$, and each partition  $X=\{x_j\}_{j=0}^n$ of   $[-1,1]$,
 there is a   function $f\in\C^r[-1,1]\cap \con$, such that for every    $s\in S(X,m)\cap\con$ and any positive on $(-1,1)$ function $\psi$ such that $\lim_{x\to \pm 1} \psi(x)=0$, either
\be \label{glswineqcon1}
\limsup_{x\to -1} \frac{|f(x)-s(x)|}{\varphi^2(x) \psi(x)} = \infty \quad \mbox{\rm or}\quad
\limsup_{x\to 1} \frac{|f(x)-s(x)|}{\varphi^2(x)\psi(x)} = \infty .
\ee
\end{theorem}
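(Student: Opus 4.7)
The plan is to adapt the proof of \thm{thmneg1} by replacing Markov's inequality on $[-1,1]$ with Markov's inequality on the rightmost subinterval of the partition. Set $h:=1-x_{n-1}>0$ and pick
\[
\e := \min\Bigl\{h/2,\ \tfrac{(r+1)h}{4(m-1)^2}\Bigr\}
\]
(the second term is interpreted as $+\infty$ when $m=1$, in which case the argument below trivialises). Define $f$ exactly as in the proof of \thm{thmneg1}:
\[
f(x):=\begin{cases}0,&-1\le x\le 1-\e,\\(x-1+\e)^{r+1},&1-\e<x\le 1.\end{cases}
\]
Then $f\in\C^r[-1,1]\cap\con$, and since $\e<h$, the ``bump'' of $f$ sits entirely inside the last subinterval $[x_{n-1},1]$, while $f$ vanishes on the rest of $[-1,1]$.

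Now suppose for contradiction that both limsups in \ineq{glswineqcon1} are finite for some $s\in S(X,m)\cap\con$ and some admissible $\psi$. Because $\varphi^2(x)\psi(x)=o(1-x)$ as $x\to 1$, and similarly as $x\to -1$, the hypothesis forces the interpolation conditions
\[
s(1)=f(1)=\e^{r+1},\quad s'(1)=f'(1)=(r+1)\e^r,\quad s(-1)=0,\quad s'(-1)=0,
\]
where the derivatives are one-sided (and coincide with polynomial derivatives since $\pm 1$ are knots of~$X$).

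The next step uses the global convexity of $s$ to control its norm on $[-1,1]$. Since $s'$ is non-decreasing on $[-1,1]$ (as a convex function) and $s'(-1)=0$, we obtain $s'\ge 0$ on $[-1,1]$; consequently $s$ is non-decreasing on $[-1,1]$, so
\[
\|s\|_{[-1,1]}=s(1)=\e^{r+1}.
\]
Restricting to the last subinterval, $s$ is a polynomial of degree $\le m-1$ on $[x_{n-1},1]$, and $\|s\|_{[x_{n-1},1]}\le\e^{r+1}$. Markov's inequality on $[x_{n-1},1]$ yields
\[
(r+1)\e^r \;=\; s'(1) \;\le\; \|s'\|_{[x_{n-1},1]} \;\le\; \frac{2(m-1)^2}{h}\,\|s\|_{[x_{n-1},1]} \;\le\; \frac{2(m-1)^2}{h}\,\e^{r+1},
\]
which rearranges to $\e\ge\frac{(r+1)h}{2(m-1)^2}$, contradicting our choice of $\e$.

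The main conceptual issue, relative to \thm{thmneg1}, is that Markov's inequality on a short subinterval is weaker by a factor of order $1/h$, so we must take $\e$ much smaller than $h$; this is precisely why $\NN$ in \thm{thm2} cannot be chosen independently of $f$, as the adversarial $f$ here depends on the location $x_{n-1}$ of the rightmost knot. The other delicate point is ensuring that convexity globally---not only on the last piece---is used to pin down $\|s\|_{[-1,1]}$, which is why the interpolation conditions at $x=-1$ are invoked.
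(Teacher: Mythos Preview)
Your proof is correct and follows essentially the same route as the paper's own argument: construct the same truncated-power function $f$ as in \thm{thmneg1}, derive the interpolation conditions $s(\pm1)=f(\pm1)$ and $s'(\pm1)=f'(\pm1)$ from the assumed finiteness of both limsups, use convexity to conclude $\|s\|_{[x_{n-1},1]}=s(1)=\e^{r+1}$, and then apply Markov's inequality on the rightmost subinterval $[x_{n-1},1]$ to obtain a contradiction with a sufficiently small choice of $\e$ (depending on $h=1-x_{n-1}$ and $m$). The only cosmetic differences are that you include the harmless extra condition $\e\le h/2$ and spell out the $m=1$ trivial case, while the paper simply says to repeat the proof of \thm{thmneg1} verbatim and records the Markov step on $[x_{n-1},1]$.
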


\begin{proof} We follow, word for word, the proof of \thm{thmneg1} except that we  apply Markov's inequality on $[x_{n-1},1]$ to get
\[
(r+1) \e^r = s'(1) = \norm{s'}{[x_{n-1},1]} \le \frac{2(m-1)^2}{1- x_{n-1}} \norm{s}{[x_{n-1},1]}   = \frac{2(m-1)^2}{1- x_{n-1}} \e^{r+1},
\]
and so arrive at a contradiction by choosing $\e$ to be smaller than $\ds \frac{(r+1) (1- x_{n-1})}{ 2(m-1)^2}$.

\end{proof}

We are now ready to state a more general result on approximation by convex piecewise polynomials on general partitions.
It is convenient to use the following notation:
\be \label{not1}
\lw_k (f, x; [a,b]) :=   \min_{1\le m \le k}   \w_m(f , (x-a)^{1/m} (b-a)^{(m-1)/m}; [a,b])
\ee
and
\be \label{not2}
\rw_k (f, x; [a,b]) :=   \min_{1\le m \le k}   \w_m(f, (b-x)^{1/m} (b-a)^{(m-1)/m}; [a,b]) .
\ee

Note that
\be \label{needpr}
2^{1-k} \w_k(f, b-a; [a,b]) \le \lw_k (f, b; [a,b]) =  \rw_k (f, a; [a,b]) \le  \w_k(f, b-a; [a,b]) .
\ee

\begin{theorem}\label{splinepositive} For every $r\in\mathbb N$ there is a constant $c=c(r)$ with the following property. For each convex function $f\in \C^r[a,b]$, there is a number $H>0$, such that for every partition $X   =\{x_j\}_{j=0}^n$ of $[a,b]$, satisfying
\be\label{22}
x_1-a\le H\quad\text{and}\quad b-x_{n-1}\le H
\ee
there is  a convex piecewise polynomial $s\in S(X,r+2)$, such that
\be  \label{end}
 |f(x)-s(x)|   \le c(x-a)^r \, \lw_2 (f^{(r)}, x;[a,x_{1}]),   \quad   x\in [a,x_{1}] ,
\ee
\be\label{ends}
|f(x)-s(x)|\le c(b-x)^r \, \rw_2(f^{(r)},x;[x_{n-1},b]),\quad x\in[x_{n-1},b],
\ee
and, for each $j=2,\dots,n-1$ and $x\in[x_{j-1},x_{j}]$,
\begin{align}\label{inner}
|f(x)-s(x)|&\le c(x_j-x_{j-1} )^r\w_2(f^{(r)},x_j-x_{j-1};[x_{j-1},x_{j}])\\
&+c(x_{1}-a)^r\w_2(f^{(r)},x_{1}-a;[a,x_{1}])\nonumber\\
&+c(b-x_{n-1})^r\w_2(f^{(r)},b-x_{n-1};[x_{n-1},b]).\nonumber
\end{align}
\end{theorem}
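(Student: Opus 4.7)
My plan is to build $s$ piece-by-piece. On each subinterval $I_j := [x_{j-1}, x_j]$ I would first construct a convex polynomial $q_j$ of degree $\le r+1$ with a good local Jackson-type estimate, and then glue the $q_j$'s into a continuous, globally convex piecewise polynomial.

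On each interior interval $I_j$ (with $2 \le j \le n-1$), I would invoke a convex Whitney-type inequality (shape-preserving polynomial approximation of a $C^r$ convex function at the Whitney threshold $n=r+1$): this yields a convex polynomial $q_j$ of degree $\le r+1$ with $\|f-q_j\|_{I_j}\le c(r)\,h_j^r\,\w_2(f^{(r)},h_j;I_j)$, where $h_j:=x_j-x_{j-1}$. On the two boundary intervals, since $\lw_2=\min\{\w_1,\w_2\}$, the polynomial $q_1$ must simultaneously satisfy an $\w_2$-bound of Whitney type and an $\w_1$-bound. I would therefore take $q_1$ to be a convex polynomial of degree $\le r+1$ that \emph{interpolates} the Taylor data $f(a),f'(a),\ldots,f^{(r-1)}(a)$ at $x=a$: Taylor's remainder formula then yields the $\w_1$-bound, while choosing the $r$th coefficient of $q_1$ by a Whitney argument on $[a,x_1]$ yields the $\w_2$-bound. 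The smallness $x_1-a\le H=H(f)$ ensures both that this construction can be carried out while keeping $q_1$ convex on $[a,x_1]$ and that the $\w_1$-portion of $\lw_2$ is actually active (for $x$ near $a$, $f^{(r)}$ is nearly constant so $\w_1(f^{(r)},x-a)$ dominates $\w_2(f^{(r)},\sqrt{(x-a)(x_1-a)})$). The analogous construction near $b$ gives $q_n$ and \eqref{ends}.

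To obtain a continuous $s$, I would add low-degree Hermite-type corrections to each $q_j$ so that the adjacent polynomials agree at every interior knot. A standard telescoping argument bounds each such correction in $L^\infty(I_j)$ by a sum of local Jackson quantities on $I_{j-1}$, $I_j$, $I_{j+1}$ and on the two boundary intervals $I_1$ and $I_n$, which is precisely the source of the extra boundary terms $(x_1-a)^r\w_2(f^{(r)},x_1-a;[a,x_1])$ and $(b-x_{n-1})^r\w_2(f^{(r)},b-x_{n-1};[x_{n-1},b])$ appearing in \eqref{inner}.

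The main obstacle will be ensuring that $s$ is globally convex, i.e.\ $s'(x_j^-)\le s'(x_j^+)$ at every interior knot (convexity on each piece is already supplied by the convex Whitney step). The derivative errors $|q_j'(x_j)-f'(x_j)|$ are only of order $h_j^{r-1}\,\w_2(f^{(r)},h_j;I_j)$, and $f'$ itself is continuous at each knot, so no automatic convexity margin is available. For $r\ge 2$ I would sidestep this by performing Hermite interpolation of both $f$ and $f'$ at every knot (feasible since $r+1\ge 3$): this forces $s\in \C^1$, whence convexity across knots follows from convexity on each piece. For $r=1$ the degrees are too small for full Hermite interpolation, and one must instead insert a local quadratic correction near each knot whose cost is absorbed into the error only after $H$ is taken sufficiently small depending on $f$. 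The necessity of $H=H(f)$ is consistent with, and in fact forced by, \thm{thmneg2}.
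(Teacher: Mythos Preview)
There is a genuine gap in your plan for global convexity. For $r\ge 2$ you propose to force $s\in\C^1$ by two-point Hermite interpolation of $f$ and $f'$ at each knot, but this is incompatible with convexity of the pieces. Already for $r=2$ the degree budget $r+1=3$ makes the piece on $[x_{j-1},x_j]$ the \emph{unique} cubic Hermite interpolant, and the cubic Hermite interpolant of a convex function need not be convex: for $f(x)=x^4/12$ on $[0,1]$ one computes $P(x)=-x^2/12+x^3/6$ and $P''(0)=-1/6<0$. There is thus in general no convex polynomial of degree $\le r+1$ satisfying all four endpoint conditions, and adding a Hermite correction to a convex Whitney polynomial will typically destroy its convexity. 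For $r=1$ you invoke a ``local quadratic correction near each knot whose cost is absorbed \ldots\ after $H$ is taken sufficiently small'', but $H$ controls only the two boundary intervals; it has no effect on interior knots, so nothing in your scheme bounds those corrections.

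The paper avoids this obstruction by \emph{not} aiming for $\C^1$. On each interior interval it uses (via the convex Whitney result of \cite{LS}, recorded here as \lem{lemLS3} and \lem{lemLS4}) a convex $q_j\in\Poly_{r+1}$ that interpolates $f$ at both endpoints and $f'$ at \emph{one} endpoint, with only an inequality at the other: $q_j'(x_{j-1}{+})\ge f'(x_{j-1})$ and $q_j'(x_j{-})\le f'(x_j)$. These one-sided conditions chain to give $\sigma'(x_j{-})\le f'(x_j)\le \sigma'(x_j{+})$, hence global convexity of the resulting $\sigma$, with no correction needed at interior knots. On the boundary intervals the paper takes an antiderivative of a one-sided Lagrange--Hermite polynomial for $f'$, which matches $f'$ at $x_1$ (resp.\ $x_{n-1}$) exactly but not $f$; the value jumps $\delta,\widetilde\delta$ at $x_1,x_{n-1}$ are then absorbed by replacing $\sigma$ on $[x_1,x_{n-1}]$ with the affine rescaling $\lambda(\sigma-l)+l+\delta$ about a tangent line $l$, with $\lambda\in(0,1]$, which preserves convexity. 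The smallness of $H$ enters only to keep $|\delta|,|\widetilde\delta|$ small enough that $\lambda>0$, and to ensure (via \cor{lem11}) that the boundary pieces themselves are convex.
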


\thm{splinepositive} is proved in  \sectio{sec4} after we discuss some auxiliary results in  \sectio{sec3}, and we now show how it implies \thm{thm2}.

\begin{proof}[Proof of \thm{thm2}] Suppose that \thm{splinepositive} is proved. Then, if we let $X$ be the Chebyshev partition $T_n = \{t_j\}$, where $n \geq \NN := 3/\sqrt{H}$ , then \ineq{22} is satisfied since
\[
 t_1+1=1-t_{n-1}  = 2 \sin^2\left( \frac{\pi}{2n} \right)    \le  \frac{\pi^2}{2n^2} \le \frac{5}{\NN^2}     \le H .
\]
 Now, as is well known and is not difficult to check, $\varphi(x)/n\sim\rho_n(x)\sim t_j - t_{j-1}$, for $x\in[t_{j-1}, t_j]$, $2\le j\le n-1$.
 Hence \ineq{interspline} follows from \ineq{end} through \ineq{inner}, and \ineq{interendtwo} and \ineq{interendone} follow  from \ineq{end} and \ineq{ends}.
\end{proof}

\sect{Auxiliary results}\label{sec3}


\begin{lemma}  \label{genlem}
Let $r\in\N_0$, $k\in \N$, $f\in \C^r[a,b]$, $C_0\ge 1$,
   and let $\P\in \Poly_{k+r-1}$ be any polynomial such that $\P^{(\nu)}(a) = f^{(\nu)}(a)$, $0\le \nu \le r$, and
\be \label{auxin}
 \norm{f  -\P   }{[a,b]} \le C_0  (b-a)^{r}  \w_k(f^{(r)}, b-a; [a,b]) .
\ee
  Then, for all $x\in [a,b]$ and all $1\le m \le k$, we have
 \be \label{auxcon}
|f(x)-\P(x)| \leq c C_0    (x-a)^r \w_m(f^{(r)} , (x-a)^{1/m} (b-a)^{(m-1)/m}; [a,b]),
\ee
where the constant $c$ depends only on $k$ and $r$.
\end{lemma}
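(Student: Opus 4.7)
Let $g:=f^{(r)}$ and $q:=\P^{(r)}\in\Poly_{k-1}$; the interpolation hypothesis on $\P$ gives $q(a)=g(a)$. The plan is to reduce to a sub-claim on $\|g-q\|_{[a,x]}$ in two preparatory steps. \emph{Step 1 (a derivative bound):} For $r=0$ this is the hypothesis \ineq{auxin}; for $r\ge 1$ I show $\|g-q\|_{[a,b]}\le c(k,r)\,C_0\,\w_k(g,b-a;[a,b])$. To this end I integrate $r$ times from $a$ a Whitney polynomial $p\in\Poly_{k-1}$ for $g$ (adjusted so that $p(a)=g(a)$) and add the low-order Taylor terms of $f$ at $a$, forming a polynomial $Q\in\Poly_{k+r-1}$ with $Q^{(\nu)}(a)=f^{(\nu)}(a)$ for $\nu=0,\dots,r$ and $\|f-Q\|_{[a,b]}\le c(b-a)^r\w_k(g,b-a)$. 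Then $\P-Q$ vanishes at $a$ to order $r+1$ with $\|\P-Q\|_{[a,b]}\le cC_0(b-a)^r\w_k(g,b-a)$, so Markov's inequality bounds $(\P-Q)^{(r)}$ by $cC_0\w_k(g,b-a)$, finishing Step 1. \emph{Step 2 (Taylor reduction):} Since $(f-\P)^{(\nu)}(a)=0$ for $\nu\le r$, Cauchy's integral form of the Taylor remainder gives $|f(x)-\P(x)|\le (x-a)^r\|g-q\|_{[a,x]}/r!$; it therefore suffices to prove the sub-claim
\[
\|g-q\|_{[a,x]}\le c(k,r)\,C_0\,\w_m(g,h_m;[a,b]),\qquad 1\le m\le k,
\]
with $h_m:=(x-a)^{1/m}(b-a)^{(m-1)/m}$.

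For the sub-claim, I use the Johnen equivalence $\w_m\sim K_m$ to pick $\phi$ with $\phi^{(m)}\in L_\infty[a,b]$ and $\|g-\phi\|_{[a,b]}+h_m^m\|\phi^{(m)}\|_{[a,b]}\le c\,\w_m(g,h_m;[a,b])$; let $T\in\Poly_{m-1}\subseteq\Poly_{k-1}$ be the order-$(m-1)$ Taylor polynomial of $\phi$ at $a$. The Taylor remainder yields the strong local bound $\|\phi-T\|_{[a,x]}\le c((x-a)/(b-a))^{m-1}\w_m(g,h_m)$ and the weaker global bound $\|\phi-T\|_{[a,b]}\le c((b-a)/(x-a))\w_m(g,h_m)$, the latter because $(b-a)^m/h_m^m=(b-a)/(x-a)$. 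Combining these with Step 1 and the routine scaling $\w_k(g,b-a)\le c\,\w_m(g,b-a)\le c((b-a)/(x-a))\,\w_m(g,h_m)$ yields $\|q-T\|_{[a,b]}\le cC_0\,((b-a)/(x-a))\,\w_m(g,h_m)$. Since $(q-T)(a)=g(a)-\phi(a)$ has absolute value $\le c\,\w_m(g,h_m)$, the polynomial $R:=q-T-[q(a)-T(a)]\in\Poly_{k-1}$ vanishes at $a$ and carries the same $[a,b]$-bound. Factoring $R(y)=(y-a)\widetilde R(y)$ with $\widetilde R\in\Poly_{k-2}$ and applying a Remez-type estimate on $[(a+b)/2,b]$ gives the polynomial-vanishing-at-$a$ inequality $\|R\|_{[a,x]}\le c(k)\,((x-a)/(b-a))\,\|R\|_{[a,b]}$, which cancels the factor $(b-a)/(x-a)$ exactly; the triangle inequality $\|g-q\|_{[a,x]}\le \|g-\phi\|_{[a,x]}+\|\phi-T\|_{[a,x]}+\|T-q\|_{[a,x]}$ then completes the sub-claim.

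The main obstacle is tracking this cancellation. Both the Step 1 bound on $\|g-q\|_{[a,b]}$ and the global Taylor remainder $\|\phi-T\|_{[a,b]}$ produce a factor $(b-a)/(x-a)$ that blows up as $x\to a$; the definition $h_m=(x-a)^{1/m}(b-a)^{(m-1)/m}$ is calibrated precisely so that $((b-a)/h_m)^m=(b-a)/(x-a)$, which in turn is cancelled by the $(x-a)/(b-a)$ factor from the polynomial-vanishing-at-$a$ inequality for $R\in\Poly_{k-1}$. Any other choice of $h_m$ would leave an uncancellable residue.
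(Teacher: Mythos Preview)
Your argument is correct, but it follows a different route from the paper's.

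The paper constructs an auxiliary polynomial $\mathcal L\in\Poly_{m+r-1}$ whose $r$th derivative is a Whitney polynomial for $f^{(r)}$ interpolating at $a$, and splits $f-\P=(f-\mathcal L)+(\mathcal L-\P)$. For $f-\mathcal L$ it uses the Taylor remainder to reduce to $\w_1(g^{(r)},x)$ and then applies Marchaud's inequality together with an integral-splitting trick (breaking $\int_x^{2b}\uw(u)u^{-2}\,du$ at $u=\omx$ and using $u_2^{-m}\uw(u_2)\le 2^m u_1^{-m}\uw(u_1)$) to reach $\uw(\omx)$. For $Q:=\P-\mathcal L$, which vanishes to order $r+1$ at $a$, it uses the pointwise bound $|Q(x)|\le x^{r+1}\|Q^{(r+1)}\|$ combined with $(r{+}1)$-fold Markov and the same modulus growth to obtain the factor $x^r\uw(\omx)$ directly.

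Your route replaces Marchaud by the Johnen $K$-functional equivalence: you first reduce to bounding $\|g-q\|_{[a,x]}$ via the Taylor remainder, then decompose through the near-optimal smooth $\phi$ and its Taylor polynomial $T$ at $a$. The cancellation that the paper achieves through the $(r{+}1)$st derivative and modulus scaling you instead obtain by splitting it into an $r$-fold Markov step (your Step~1) plus a single factor $(x-a)/(b-a)$ coming from the Remez-type bound on $R=q-T-\mathrm{const}$, which vanishes once at $a$. Your approach makes the calibration $h_m^m=(x-a)(b-a)^{m-1}$ very transparent and avoids the Marchaud integral, at the cost of invoking the $K$-functional equivalence and the Chebyshev/Remez bound for $\widetilde R$; the paper's approach is slightly more self-contained but the integral-splitting computation is more delicate. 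Both yield the same constant dependence $c=c(k,r)$.
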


\begin{remark} \label{mainremark}
\begin{itemize}
\item[(i)] In the case $k=1$, such $\P(f) \in\Poly_r$ is unique; it is   the Taylor polynomial for $f$ at $x=a$, and \ineq{auxin} is rather obvious.

\item[(ii)] In the case $r=0$,   $\P(f)\in\Poly_{k-1}$ may be chosen  to be any polynomial of degree $\le k-1$ interpolating $f$ at $k$ points in $[a,b]$ that include $x=a$ and such that the distance between any two of them is bounded below by $\lambda (b-a)$ for some constant $\lambda >0$ (the constant $C_0$   will depend on $\lambda$ in this case).
\end{itemize}
\end{remark}

Note that, using the notation \ineq{not1}, estimate \ineq{auxcon} can be restated in the following equivalent form:
\[
|f(x)-\P(x)| \leq C (x-a)^r  \lw_k (f^{(r)} , x; [a,b]), \quad C = C(k,r, C_0).
\]
It is also clear that an analog of \lem{genlem} holds if $\P$ interpolates $f$ and its derivatives at $x=b$ instead of $a$, \ie if $P\in \Poly_{k+r-1}$ satisfies \ineq{auxin} and $\P^{(\nu)}(b) = f^{(\nu)}(b)$, $0\le \nu \le r$, then
\be \label{right}
|f(x)-\P(x)| \leq C(k,r, C_0) (b-x)^r  \rw_k (f^{(r)} , x; [a,b]), \quad x\in [a,b] .
\ee

\begin{proof}[Proof of \lem{genlem}]
 We assume that $a=0$. It is obvious that we do not lose any generality making this assumption, but it will make some expressions shorter.

Let $x\in (0,b]$ and   $1\le m\le k$   be fixed throughout this proof.
Denote $\omx := x^{1/m} b^{(m-1)/m}$ and  note that $x  \le \omx \le b$.  It is also convenient   to denote
\[
\uw(u):= \w_m(f^{(r)}, u; [a,b]).
\]

Now, let  $\LL  \in \Poly_{m+r-1}$ be such that $\LL^{(\nu)}(0) = f^{(\nu)}(0)$, $0\le \nu \le r-1$, and $\LL^{(r)}\in \Poly_{m-1}$ is any polynomial satisfying Whitney's inequality on $[0,b]$ and interpolating $f^{(r)}$ at $x=0$.
For example, we can define $\LL^{(r)}(x):= \P^*(x)-\P^*(0)+f^{(r)}(0)$, where $\P^*\in\Poly_{m-1}$ is the polynomial of best approximation of $f^{(r)}$ on $[0,b]$.

We first show that the following estimate holds
\be \label{mainl}
|f(x)-\LL(x)| \leq c x^r \uw(\omx) .
\ee
To this end, with  $g := f-\LL$, since $| g^{(r)}(t) | = | g^{(r)}(t)- g^{(r)}(0) | \le \w_1 (g^{(r)}, x; [0,b])$, $0\le t \le x$,  we   have, if $r\ge 1$,
\begin{align} \label{prineq}
|g(x)|   \le \frac{1}{(r-1)!}   \int_0^x (x-t)^{r-1} | g^{(r)}(t) | \, dt \le x^r \w_1 ( g^{(r)}, x; [0,b]).
\end{align}
Clearly, the same estimate also holds for $r=0$. We now note that $\w_m( g^{(r)}, \cdot ; [a,b]) =\uw(\cdot)$ because $\LL^{(r)}\in\Poly_{m-1}$,
and so
 \ineq{mainl} is verified  if $m=1$.

By Whitney's inequality,  $\norm{g^{(r)}  }{[0,b]} \le c \uw(b)$, and thus
 if $m\ge 2$, then
 using (a particular case of) the well known  Marchaud inequality: if $F\in \C(I)$, then
\[ 
\w_1(F, t; I) \le c(m) t  \left( \int_t^{|I|} \frac{\w_m(F, u; I)}{u^{2}} \, du +   \frac{ \norm{F}{I}}{|I| } \right) ,
\]
where $|I|$ denotes the length of the interval $I$,
 we have from \ineq{prineq}
\begin{align*}
|g(x)| &\le c x^{r+1} \left(   \int_{x}^{b} \frac{\w_m(g^{(r)}, u; [0,b])}{u^2} \, du + \frac{\norm{g^{(r)}}{[0,b]}}{ b } \right) \\
&\le c x^{r+1} \left(   \int_{x}^{b} \frac{\uw(u)}{u^2} \, du + \frac{\uw(b)}{ b } \right)
\le c x^{r+1} \int_{x}^{2b} \frac{\uw(u)}{u^2} \, du .
\end{align*}
Now, since
$u_2^{-m} \uw(u_2) \le 2^m u_1^{-m} \uw(u_1)$, for   $0<u_1 <u_2$, we have
\begin{align*}
\int_{x}^{2b} \frac{\uw(u)}{u^2} \, du & = \left(  \int_{x}^{\omx} + \int_{\omx}^{2b} \right) \frac{\uw(u)}{u^2} \, du \\
& \le
\uw(\omx) \int_{x}^{\infty} u^{-2}\, du  + 2^m \omx^{-m} \uw(\omx) \int_{0}^{2b} u^{m-2} \, du \\
&=
\frac{\uw(\omx)}{x} \left(1 + \frac{2^{2m-1}}{m-1} \right) ,
\end{align*}
and so \ineq{mainl} is  proved.

Observe now that $Q:= \P-\LL\in \Poly_{k+r-1}$ is such that  $Q^{(\nu)}(0)=0$, $0\le \nu \le r$, and so by Markov's inequality and \ineq{auxin}
\begin{align*}
|Q(x)| & \le x^{r+1} \norm{ Q^{(r+1)}}{[0,b]} \le c x^{r+1} b^{-r-1}\norm{ Q}{[0,b]} \\
& \le
c x^{r+1} b^{-r-1} \left( \norm{f-\P}{[0,b]} + \norm{f-\LL}{[0,b]} \right) \\
& \le c C_0  x^{r+1} b^{-1} \uw(b)
  \le c C_0  x^{r+1} b^{m-1} \omx^{-m} \uw(\omx) \\
  &\le c C_0  x^{r}\uw(\omx),
\end{align*}
which, together with \ineq{mainl},  immediately implies \ineq{auxcon}.
\end{proof}

 \begin{corollary}  \label{ktwo}
Let $r\in\N_0$ and $f\in \C^r[a,b]$,
   and let $L \in \Poly_{r+1}$ be the polynomial of degree $\le  r+1$ such that $L^{(\nu)}(a) = f^{(\nu)}(a)$, $0\le \nu \le r$ and $L(b)=f(b)$.
 Then, for all $x\in [a,b]$, we have
 \be \label{auxconboth}
|f(x)-L (x)| \leq c (x-a)^r \lw_2(f^{(r)} , x;  [a,b]) ,
\ee
where the constant  $c$ depends  only on  $r$.
\end{corollary}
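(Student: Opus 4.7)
The strategy is to verify that $L$ itself satisfies the hypothesis of \lem{genlem} with $k=2$, and then invoke that lemma twice, once with $m=1$ and once with $m=2$, taking the minimum to recover $\lw_2$.

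For the verification, we need a constant $C_0=C_0(r)$ such that
$$\|f-L\|_{[a,b]} \le C_0(b-a)^r\w_2(f^{(r)},b-a;[a,b]).$$
When $r=0$, $L$ is the secant line through $(a,f(a))$ and $(b,f(b))$, and this bound is Whitney's inequality; equivalently, \rem{mainremark}(ii) applies directly to $L$ with $\lambda=1$. For $r\ge 1$, I would construct an auxiliary polynomial $\P^*\in\Poly_{r+1}$ matching the same Hermite data as $L$ at $a$: by Whitney's inequality choose $p\in\Poly_1$ with $\|f^{(r)}-p\|_{[a,b]}\le c\,\w_2(f^{(r)},b-a;[a,b])$, and let $\P^*$ be the unique polynomial of degree $\le r+1$ with $(\P^*)^{(\nu)}(a)=f^{(\nu)}(a)$ for $0\le\nu\le r$ and $(\P^*)^{(r)}=p$. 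Integrating $r$ times from $a$ yields
$$\|f-\P^*\|_{[a,b]}\le c(b-a)^r\w_2(f^{(r)},b-a;[a,b]).$$
Since $L-\P^*\in\Poly_{r+1}$ vanishes to order $r+1$ at $a$, we have $L(x)-\P^*(x)=d(x-a)^{r+1}$ with $d=(f(b)-\P^*(b))/(b-a)^{r+1}$, hence
$$\|L-\P^*\|_{[a,b]}=|f(b)-\P^*(b)|\le\|f-\P^*\|_{[a,b]}\le c(b-a)^r\w_2(f^{(r)},b-a;[a,b]),$$
and the triangle inequality yields the required bound on $\|f-L\|_{[a,b]}$.

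With $L$ now known to satisfy the hypothesis of \lem{genlem} with $k=2$ and some $C_0=C_0(r)$, applying the lemma successively with $m=1$ and $m=2$ gives
$$|f(x)-L(x)|\le c(r)(x-a)^r\w_m\bigl(f^{(r)},(x-a)^{1/m}(b-a)^{(m-1)/m};[a,b]\bigr),\quad m\in\{1,2\},$$
for every $x\in[a,b]$. Taking the minimum of the two right-hand sides and invoking the definition \ineq{not1} of $\lw_2$ produces \ineq{auxconboth}. The only nontrivial ingredients are the construction of $\P^*$ and the observation that $L-\P^*$ reduces to a monomial in $(x-a)$; I do not foresee any substantive obstacle beyond these routine estimates, and the main technical content has already been absorbed into \lem{genlem}.
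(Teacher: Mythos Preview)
Your approach is correct and follows the same overall strategy as the paper: both verify that $L$ satisfies the hypothesis \ineq{auxin} of \lem{genlem} with $k=2$, and then invoke that lemma (for $m=1,2$) to obtain \ineq{auxconboth}. The difference lies only in how the bound $\|f-L\|_{[a,b]}\le C_0(b-a)^r\w_2(f^{(r)},b-a;[a,b])$ is established. The paper sets $g:=f-L$, writes (with $a=0$) the Taylor integral $g(x)=\frac{x^r}{(r-1)!}\int_0^1(1-t)^{r-1}g^{(r)}(xt)\,dt$, uses $g(b)=0$ to subtract the corresponding vanishing integral, and then recognizes $g^{(r)}(y)-\frac{y}{bt}g^{(r)}(bt)$ as the error of linear interpolation of $g^{(r)}$ on $[0,bt]$, to which Whitney's inequality applies. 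Your route via an auxiliary $\P^*$ and the monomial structure of $L-\P^*$ is more modular and arguably easier to generalize; the paper's device is slicker but more ad hoc.

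One small slip to repair: as written, your conditions on $\P^*$ are overdetermined, since requiring $(\P^*)^{(r)}=p$ forces $(\P^*)^{(r)}(a)=p(a)$, which need not equal $f^{(r)}(a)$. The standard fix (indeed the one used inside the proof of \lem{genlem}) is to replace $p$ by $\widetilde p(x):=p(x)-p(a)+f^{(r)}(a)$, which still satisfies $\|f^{(r)}-\widetilde p\|_{[a,b]}\le 2c\,\w_2(f^{(r)},b-a;[a,b])$; with this adjustment your construction of $\P^*$ is well defined and the rest of the argument goes through unchanged.
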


\begin{remark}
We note that the estimate \ineq{auxconboth} with $\w_2(f^{(r)} , \sqrt{(x-a)(b-a)};  [a,b])$ instead of $\lw_2(f^{(r)} , x;  [a,b])$  appeared, among other places, in \cite{LP}*{Corollary 3.5}.
\end{remark}

 It is   clear than an analog of this results holds if interpolation of the derivatives of $f$ takes place at $x=b$ instead of $a$, \ie
if $L \in \Poly_{r+1}$ is the polynomial of degree $\le  r+1$ such that $L^{(\nu)}(b) = f^{(\nu)}(b)$, $0\le \nu \le r$ and $L(a)=f(a)$, then
 \be \label{rightcor}
|f(x)-L (x)| \leq c (b-x)^r  \rw_2  (f^{(r)} , x;  [a,b]) , \quad x\in [a,b].
\ee

\begin{proof}[Proof of \cor{ktwo}]
As in the proof of \lem{genlem}, it is clear that we do not lose generality by assuming $a=0$.
Now, if  $g:= f-L$, then $g(b)=0$ and $g^{(\nu)}(0) =0$, $0\le \nu \le r$, and
by \lem{genlem}, it is sufficient to prove that
$\norm{g  }{[0,b]} \le c   b^r \w_2 (g^{(r)}, b; [0,b])$.

Since
$g(x) = \frac{1}{(r-1)!} x^r  \int_0^1 (1-t)^{r-1} g^{(r)}(xt) dt$, equality
$g(b)=0$ implies that   $\int_0^1 (1-t)^{r-1} g^{(r)}(bt) dt = 0$, and so
\begin{align*}
\norm{g}{[0,b]} & \le c b^r  \sup_{0< t\le 1, \; 0\le x\le b} | g^{(r)}(xt) - x g^{(r)}(bt)/b|  \\
&  \le c b^r \sup_{0< t\le 1, \; 0\le y\le  bt} | g^{(r)}(y) - y g^{(r)}(bt)/(bt)| \\
&\le c b^r  \sup_{0< t\le 1} \w_2( g^{(r)}, bt; [0, bt])
 \le c  b^r \w_2( g^{(r)}, b; [0, b]),
\end{align*}
as needed. Here, the second last estimate follows from Whitney's inequality using the observation that $l(y) = y g^{(r)}(bt)/(bt)$ is the linear polynomial interpolating $g^{(r)}$ at $0$ and $bt$.
\end{proof}

The following lemma was proved in \cite{LS}.

\begin{lemma}[\cite{LS}*{Corollary 2.4}]
\label{lemLS3} Let $k\in\N$ and let $f\in \C^2[a,b]$ be convex.  Then there exists a convex polynomial $P\in\Poly_{k+1}$, satisfying
$P(a)=f(a)$ and $P(b)=f(b)$, and either $P'(a)= f'(a)$ and $P'(b)\le f'(b)$, or $P'(a)\ge f'(a)$ and $P'(b)= f'(b)$, such that
\[
\|f-P\|_{[a,b]}\le c(k) (b-a)^2 \w_k(f'',b-a;[a,b]).
\]
\end{lemma}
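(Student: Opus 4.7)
The plan is to construct $P$ by approximating $f''$ (which is non-negative since $f$ is convex) by a non-negative polynomial of degree at most $k-1$, then integrating twice with carefully chosen integration constants so that the endpoint interpolation at $a$ and $b$ is met. The dichotomy in the statement will arise from the sign of an unavoidable correction.

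First, I would fix $p\in\Poly_{k-1}$ satisfying the Jackson--Whitney estimate $\|f''-p\|_{[a,b]}\le c(k)\w_k(f'',b-a;[a,b])$, and then convert it to the non-negative polynomial $q:=p+\|f''-p\|_{[a,b]}\in\Poly_{k-1}$. Since $f''\ge 0$, this shift gives $q\ge f''$ pointwise and $\|q-f''\|_{[a,b]}\le 2\|p-f''\|_{[a,b]}$. Define the preliminary convex polynomial
\[
\tilde P(x):=f(a)+f'(a)(x-a)+\int_a^x(x-s)q(s)\,ds\in\Poly_{k+1}.
\]
Then $\tilde P''=q\ge 0$ (so $\tilde P$ is convex), $\tilde P(a)=f(a)$, $\tilde P'(a)=f'(a)$, and by Taylor's formula
\[
\tilde P(b)-f(b)=\int_a^b(b-s)[q(s)-f''(s)]\,ds\ge 0,
\]
together with $\|f-\tilde P\|_{[a,b]}\le \tfrac{(b-a)^2}{2}\|q-f''\|\le c(k)(b-a)^2\w_k(f'',b-a;[a,b])$.

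Next, to force $P(b)=f(b)$ while preserving the values at $a$, I would subtract the correction $R(x):=\lambda(x-a)^{k+1}$ with $\lambda:=[\tilde P(b)-f(b)]/(b-a)^{k+1}\ge 0$, obtaining $P:=\tilde P-R$. This automatically keeps $P(a)=f(a)$, $P'(a)=f'(a)$, $P(b)=f(b)$, and preserves the error estimate (the norm of $R$ on $[a,b]$ is of the same order as $\|f-\tilde P\|$). Whether $P$ is convex depends on whether $P''=q-\lambda k(k+1)(x-a)^{k-1}$ is non-negative on $[a,b]$. If it is, we are in Case~1 of the lemma, and the inequality $P'(b)\le f'(b)$ follows from integrating $P''-f''$ over $[a,b]$ and using the sign of $q-f''$ together with the definition of $\lambda$.

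If instead the correction $R$ is too large for $P''\ge 0$ to hold everywhere, I would switch to the symmetric construction anchored at $b$: set
\[
\hat P(x):=f(b)+f'(b)(x-b)+\int_b^x(x-s)q(s)\,ds,
\]
which is convex with $\hat P(b)=f(b)$, $\hat P'(b)=f'(b)$ and $\hat P(a)-f(a)=\int_a^b(s-a)[q-f'']\,ds\ge 0$, and then correct by $\lambda'(b-x)^{k+1}$ with $\lambda'\ge 0$ chosen so that $P(a)=f(a)$. By the symmetry of the setup, convexity of this second $P$ holds exactly when the first construction failed, yielding Case~2 with $P'(a)\ge f'(a)$.

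The main obstacle is Step~4, namely showing that at least one of the two constructions is convex, and identifying which of the two sign conditions in the lemma is achieved. The key point is that the (pointwise) failure of $P''\ge 0$ in the first construction forces $q$ to be small near $b$ relative to the correction $\lambda k(k+1)(x-a)^{k-1}$, and this in turn implies that in the symmetric construction the correction $\lambda'(b-x)^{k+1}$ is small relative to $q$, so $\hat P-R'$ remains convex. Once convexity is established in either case, the error estimate follows from the triangle inequality $\|f-P\|\le \|f-\tilde P\|+\|R\|_{[a,b]}\le c(k)(b-a)^2\w_k(f'',b-a;[a,b])$.
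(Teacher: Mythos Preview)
The paper does not prove \lem{lemLS3}; it is quoted directly from \cite{LS}*{Corollary~2.4}, so there is no argument here to compare against. Judged on its own, your proposal has two genuine gaps.

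First, the derivative condition in Case~1 does not follow. With $P=\tilde P-\lambda(x-a)^{k+1}$ and $P'(a)=f'(a)$, integrating $P''-f''$ gives
\[
P'(b)-f'(b)=\int_a^b\bigl(q(s)-f''(s)\bigr)\Bigl[1-\tfrac{(k+1)(b-s)}{b-a}\Bigr]\,ds,
\]
and the bracket changes sign on $[a,b]$ (it equals $-k$ at $s=a$ and $1$ at $s=b$). Since $q-f''\ge 0$ can be concentrated near either endpoint, the sign of $P'(b)-f'(b)$ is not determined by your hypotheses; ``the sign of $q-f''$ together with the definition of $\lambda$'' is not enough.

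Second, and more seriously, the dichotomy in Step~4 is asserted but not proved, and the heuristic you give is false. The difficulty is structural: your correction $R$ has $R''(x)=\lambda k(k+1)(x-a)^{k-1}$, which piles all of its mass at the far endpoint, while $q$ is only known to satisfy $f''\le q\le f''+2\|f''-p\|$. If $f''$ happens to be small near \emph{both} endpoints, then $q$ is small there too, and for $k\ge 2$ each of the two corrections can exceed $q$ at the endpoint where it peaks, so neither $P''$ nor the mirrored $\hat P''$ need be non-negative. Nothing in your argument rules this out. (Note also that a linear correction $\mu(x-a)$, which would preserve convexity, produces $P'(a)\le f'(a)$ and $P'(b)\ge f'(b)$---the wrong inequalities for the lemma---so there is no cheap fix along these lines.) The construction in \cite{LS} proceeds differently; the proof of the special case \lem{lemLS4} in the present paper, where the choice between the two alternatives is governed by the sign of $f'(a)+f'(b)$ and the approximant is a Lagrange--Hermite interpolant rather than an integrated Whitney polynomial plus correction, gives a better indication of the intended mechanism.
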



We also need the following analog of \lem{lemLS3} for $r=1$.

\begin{lemma}\label{lemLS4} Let $f\in \C^1[a,b]$ be convex.  Then there exists a convex polynomial $P\in\Poly_2$ (that is a convex parabola), satisfying
$P(a)=f(a)$ and $P(b)=f(b)$, and either $P'(a)= f'(a)$ and $P'(b)\le f'(b)$, or $P'(a)\ge f'(a)$ and $P'(b)= f'(b)$, such that
\[
\|f-P\|_{[a,b]}\le c(b-a)\w_2(f',b-a;[a,b]).
\]
\end{lemma}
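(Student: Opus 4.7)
The plan is to parameterize explicitly the one-parameter family of parabolas interpolating $f$ at the endpoints. Writing
\[
P(x) = f(a) + L(x-a) + \alpha(x-a)(x-b), \qquad L := \frac{f(b)-f(a)}{b-a},
\]
we have $P(a)=f(a)$, $P(b)=f(b)$ automatically, $P''\equiv 2\alpha$, so convexity is equivalent to $\alpha\ge 0$, and direct differentiation gives $P'(a) = L - \alpha(b-a)$ and $P'(b) = L + \alpha(b-a)$. The Mean Value Theorem produces $\xi \in (a,b)$ with $L = f'(\xi)$, and since $f\in\con\cap\C^1$ forces $f'$ to be non-decreasing, we obtain $f'(a) \le L \le f'(b)$.

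Next I split into two cases according to the sign of $L - (f'(a)+f'(b))/2$. If $L \le (f'(a)+f'(b))/2$, I set $\alpha := (L - f'(a))/(b-a) \ge 0$; this choice forces $P'(a) = f'(a)$ and $P'(b) = 2L - f'(a) \le f'(b)$. If instead $L \ge (f'(a)+f'(b))/2$, I take $\alpha := (f'(b)-L)/(b-a) \ge 0$, giving $P'(b) = f'(b)$ and $P'(a) = 2L - f'(b) \ge f'(a)$. In either case $P$ is a convex parabola with the required one-sided derivative matching at the endpoints.

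For the error estimate, it suffices to handle the first case, since the second follows via the reflection $x \mapsto a+b-x$. Using $P(a)=f(a)$ and $P'(a)=f'(a)$,
\[
f(x)-P(x) = \int_a^x \bigl(f'(t)-P'(t)\bigr)\,dt, \qquad x\in[a,b],
\]
so the task reduces to bounding $\norm{f'-P'}{[a,b]}$. Let $\ell$ be the linear interpolant to $f'$ at $a$ and $b$. Whitney's inequality gives $\norm{f'-\ell}{[a,b]} \le c\,\w_2(f',b-a;[a,b])$. The linear function $P'-\ell$ vanishes at $a$ (because $P'(a)=f'(a)=\ell(a)$), and at $b$ takes the value $2L - f'(a)-f'(b)$. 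The trapezoidal-rule identity
\[
(b-a)\Bigl(L - \tfrac{f'(a)+f'(b)}{2}\Bigr) = \int_a^b \bigl(f'(t)-\ell(t)\bigr)\,dt
\]
then bounds $|P'(b)-\ell(b)| \le 2\norm{f'-\ell}{[a,b]}$, and by linearity $\norm{P'-\ell}{[a,b]} \le c\,\w_2(f',b-a;[a,b])$. Combining with Whitney, $\norm{f'-P'}{[a,b]} \le c\,\w_2(f',b-a;[a,b])$, and integrating over $[a,x]$ delivers the claimed bound $\norm{f-P}{[a,b]} \le c(b-a)\w_2(f',b-a;[a,b])$.

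I do not anticipate a serious obstacle: the work is in picking the correct $\alpha$ (one of two candidates) so that both convexity and the one-sided derivative condition hold simultaneously. The only identity that has to be checked carefully is the link between $L - (f'(a)+f'(b))/2$ and the mean of $f' - \ell$, which is what lets Whitney's inequality on $f'$ control the slope discrepancy of $P$ at the non-interpolated endpoint.
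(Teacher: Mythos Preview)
Your proof is correct and, in fact, constructs exactly the same polynomial as the paper: after the paper's normalization $[a,b]=[0,1]$, $f(0)=f(1)=0$ (so $L=0$), your Case~1 parabola becomes $-f'(0)\,x(x-1)=(x-x^2)f'(0)$, and your case split $L\lessgtr \tfrac12(f'(a)+f'(b))$ is precisely the paper's condition $f'(0)+f'(1)\gtrless 0$. The only real difference is in verifying the error bound. The paper observes that $P$ is the Lagrange--Hermite quadratic interpolating $f$ at both endpoints and $f'$ at one of them, and then simply invokes Corollary~\ref{ktwo} with $r=1$ (or its mirror \ineq{rightcor}) to obtain the estimate. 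Your route---Whitney's inequality on $f'$, the trapezoidal identity $\int_a^b(f'-\ell)=(b-a)\bigl(L-\tfrac12(f'(a)+f'(b))\bigr)$ to control $|P'(b)-\ell(b)|$, and then integration---is a self-contained argument that bypasses the Marchaud-based machinery underlying Corollary~\ref{ktwo}; it is slightly more elementary but yields exactly the same conclusion.
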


\begin{proof} It is clear that it is sufficient to prove this lemma for $[a,b]=[0,1]$, since we can then apply a   linear transformation to a general interval.

Additionally, by subtracting a linear polynomial interpolating $f$ at the endpoints  from $f$ we can assume, without loss of generality, that $f(0)=f (1)=0$.
We now define
\[
P(x) := \left\{
\begin{array}{ll}
 (x-x^2) f'(0) , & \mbox{\rm if} \; f'(0)+f'(1) \ge 0 , \\
 (x^2-x)f'(1) , & \mbox{\rm otherwise.}
 \end{array}
 \right.
\]
Clearly, $P$ is convex and satisfies the stated interpolation conditions. In fact, it is a Lagrange-Hermite polynomial interpolating $f$ at $0$ and $1$, and $f'$ either at $0$ or at $1$.
Hence, we can   use, for example, \cor{ktwo} with $r=1$   or its analog (see \ineq{rightcor}) to conclude that the needed estimate also holds. Alternatively, we can follow   the proof of \cite{LS}*{Lemma 2.3} to verify this estimate.
\end{proof}


An immediate consequence of Lemmas \ref{lemLS3} and \ref{lemLS4} is the following result.

\begin{lemma}\label{lemLS1} If $r\in\N$ and $f\in \C^r[a,b]$ is convex on $[a,b]$, then for each partition $X   =\{x_j\}_{j=0}^n$ of $[a,b]$  there is a convex piecewise polynomial $\sigma\in S(X,r+2)$,
satisfying, for each $j=1,\dots,n$,
\be\label{omeg2}
\|f-\sigma\|_{[x_{j-1},x_{j}]}\le c(r) (x_{j}-x_{j-1})^r\w_2(f^{(r)},x_{j}-x_{j-1};[x_{j-1},x_{j}]),
\ee
\be\label{omeg5}
\sigma'(x_{j-1}+)\ge f'(x_{j-1}) ,    \qquad \sigma'(x_{j}-)\le f'(x_{j}),
\ee
and
\be\label{omeg3} \sigma(x_j)=f(x_j).
\ee
\end{lemma}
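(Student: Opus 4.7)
The plan is to build $\sigma$ by gluing together local convex polynomial approximants $P_j$ on each subinterval $[x_{j-1},x_j]$, supplied by Lemmas \ref{lemLS3} and \ref{lemLS4}.

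More precisely, on each $[x_{j-1},x_j]$ I would apply \lem{lemLS3} with $k=2$ when $r\ge 2$, and \lem{lemLS4} when $r=1$, to $f^{(r-2)}$ and $f^{(r-1)}$ respectively (or more simply, to $f$ using the higher-derivative analog of \lem{lemLS3} which produces a convex $P_j$ of degree $\le r+2$ interpolating $f$ at the endpoints and yielding the $\w_2(f^{(r)},\cdot)$ estimate; the $r=1$ case requires \lem{lemLS4} because \lem{lemLS3} is stated for $r=2$). This produces, for each $j$, a convex polynomial $P_j\in\Poly_{r+2}$ with $P_j(x_{j-1})=f(x_{j-1})$, $P_j(x_j)=f(x_j)$, and the dichotomy
\[
\text{either } P_j'(x_{j-1})=f'(x_{j-1}) \text{ and } P_j'(x_j)\le f'(x_j), \qquad \text{or } P_j'(x_{j-1})\ge f'(x_{j-1}) \text{ and } P_j'(x_j)=f'(x_j),
\]
together with the estimate $\|f-P_j\|_{[x_{j-1},x_j]}\le c(r)(x_j-x_{j-1})^r\w_2(f^{(r)},x_j-x_{j-1};[x_{j-1},x_j])$. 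I then define $\sigma$ on $[a,b]$ by $\sigma|_{[x_{j-1},x_j]}:=P_j$; continuity of $\sigma$ at each interior knot $x_j$ is immediate from $P_j(x_j)=f(x_j)=P_{j+1}(x_j)$, so $\sigma\in S(X,r+2)$.

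The remaining verifications are short. Estimate \eqref{omeg2} is precisely the local bound from \lem{lemLS3}/\lem{lemLS4}; \eqref{omeg3} is immediate from the endpoint interpolation; and \eqref{omeg5} is exactly the content of the dichotomy above (in either case one has $P_j'(x_{j-1}+)\ge f'(x_{j-1})$ and $P_j'(x_j-)\le f'(x_j)$).

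The only subtle point, and the step I would call out as the crux, is global convexity of $\sigma$. Inside each subinterval, convexity is built in because $P_j$ is convex. At an interior knot $x_j$ one needs the one-sided slopes to satisfy $\sigma'(x_j-)\le \sigma'(x_j+)$, i.e.\ $P_j'(x_j)\le P_{j+1}'(x_j)$. But convexity of $f$ gives that $f'$ is nondecreasing, and by \eqref{omeg5} applied on the two adjacent intervals,
\[
P_j'(x_j-)\;\le\;f'(x_j)\;\le\;P_{j+1}'(x_j+),
\]
which yields convexity across $x_j$ and hence on all of $[a,b]$. This is exactly why the dichotomy on the derivatives in Lemmas \ref{lemLS3} and \ref{lemLS4} was engineered the way it was, and it is the only place where the strengthened form of those lemmas (as opposed to just endpoint value interpolation) is genuinely used.
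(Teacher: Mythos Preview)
Your approach is exactly the one the paper has in mind (the paper simply states the lemma as ``an immediate consequence'' of Lemmas~\ref{lemLS3} and~\ref{lemLS4}), and your verification of continuity, the local estimate, and especially the global convexity via $P_j'(x_j-)\le f'(x_j)\le P_{j+1}'(x_j+)$ is the right argument.

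One small but genuine imprecision: for $r\ge 2$ you should apply \lem{lemLS3} to $f$ itself with $k=r$ (not with $k=2$, and not to $f^{(r-2)}$). Applying it to $f^{(r-2)}$ is illegitimate because \lem{lemLS3} requires the input function to be convex, and for $r>2$ there is no reason $f^{(r-2)}$ is convex. With $k=r$ applied to the convex $f\in\C^2$ you obtain $P_j\in\Poly_{r+1}$ (so $\sigma\in S(X,r+2)$, \ie order $r+2$, degree $\le r+1$) and the bound $c(r)(x_j-x_{j-1})^2\w_r(f'',x_j-x_{j-1};[x_{j-1},x_j])$; then the standard inequality $\w_r(f'',t)\le t^{r-2}\w_2(f^{(r)},t)$ converts this to \ineq{omeg2}. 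With that fix your argument is complete.
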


We will now discuss construction of polynomial pieces  near the endpoints of $[a,b]$.

For $f\in \C^r[a,b]$ and $0<h \le b$, denote by $L_{r,h}(f, x)$ the Lagrange-Hermite polynomial of degree $\le r+1$ such that
\[
L_{r,h}^{(\nu)}(f,a)=f^{(\nu)}(a),\quad 0\le \nu \le r , \andd L_{r,h}(f,a+h)=f(a+h) .
\]
We also denote
\[
\L_{r,h}(f,x):=\int_a^x L_{r-1,h}(f',t)dt+f(a). 
\]


\begin{lemma}  \label{newlem1}
Let $r\in\N$ and $h>0$. If $f\in \C^r[a,a+h]$, then
\[
|f(x)- \L_{r,h}(f,x)|\le c(r) (x-a)^r \lw_2(f^{(r)},x;[a,a+h]),\quad x\in[a,a+h] .
\]
\end{lemma}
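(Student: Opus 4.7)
The polynomial $\L_{r,h}(f,\cdot)$ is of degree $\le r+1$ with derivative $L_{r-1,h}(f',\cdot)$, and it satisfies $\L_{r,h}^{(\nu)}(f,a)=f^{(\nu)}(a)$ for $0\le\nu\le r$, together with $\L_{r,h}'(f,a+h)=f'(a+h)$. The natural strategy is therefore to reduce the assertion to an estimate on $f'-L_{r-1,h}(f',\cdot)$ and then integrate.

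First I would apply \cor{ktwo} (with $r$ replaced by $r-1$, which is allowed since $r\in\N$) to the function $f'\in\C^{r-1}[a,a+h]$ and the polynomial $L=L_{r-1,h}(f',\cdot)\in\Poly_r$. The interpolation conditions $L^{(\nu)}(a)=(f')^{(\nu)}(a)$ for $0\le\nu\le r-1$ and $L(a+h)=f'(a+h)$ are exactly those required by the corollary, so it yields
\[
|f'(t)-L_{r-1,h}(f',t)|\le c(r)(t-a)^{r-1}\lw_2\bigl(f^{(r)},t;[a,a+h]\bigr),\quad t\in[a,a+h].
\]

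Next, since $f(x)-\L_{r,h}(f,x)=\int_a^x\bigl[f'(t)-L_{r-1,h}(f',t)\bigr]\,dt$, integrating the pointwise bound gives
\[
|f(x)-\L_{r,h}(f,x)|\le c(r)\int_a^x(t-a)^{r-1}\lw_2\bigl(f^{(r)},t;[a,a+h]\bigr)\,dt.
\]
It remains to note that $\lw_2(f^{(r)},\cdot;[a,a+h])$ is non-decreasing on $[a,a+h]$, because for each $m\in\{1,2\}$ the quantity $\w_m\bigl(f^{(r)},(t-a)^{1/m}h^{(m-1)/m};[a,a+h]\bigr)$ is non-decreasing in $t$, and a minimum of non-decreasing functions is non-decreasing. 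Pulling this factor out of the integral and evaluating $\int_a^x(t-a)^{r-1}\,dt=(x-a)^r/r$ yields the desired bound $c(r)(x-a)^r\lw_2(f^{(r)},x;[a,a+h])$.

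There is essentially no obstacle here: the lemma is a clean $r\to r-1$ induction-style consequence of \cor{ktwo} together with the fact that antidifferentiation gains a factor $(x-a)/r$ and preserves the monotone envelope $\lw_2$. The only small point to verify is the monotonicity of $\lw_2$ in its spatial variable, which follows immediately from the definition \eqref{not1}.
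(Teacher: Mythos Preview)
Your proof is correct and follows exactly the same route as the paper: apply \cor{ktwo} with $r$ replaced by $r-1$ to $f'$, then integrate and use the monotonicity of $\lw_2$ in its spatial variable to extract the factor $(x-a)^r$. The paper leaves the monotonicity step implicit, so your proposal is, if anything, slightly more detailed.
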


\begin{proof} It follows from \cor{ktwo} that, for
  $r\in\N $, $h>0$ and $g\in \C^{r-1}[a,a+h]$, we have
\[
|g(x)- L_{r-1,h}(g,x)|\le c(r) (x-a)^{r-1} \lw_2(g^{(r-1)},x;[a,a+h]),\quad x\in[a,a+h] .
\]

For any $x\in[a,a+h]$, we have
\begin{align*}
|f(x)-{\L}_{r,h}(f,x)| &= \left|\int_a^x(f'(t)-L_{r-1,h}(f',t)dt \right|\\
&\le c\int_a^x (t-a)^{r-1}\lw_2(f^{(r)},t ;[a,a+h])dt\\
&\le c(x-a)^r\lw_2(f^{(r)},x;[a,a+h]),
\end{align*}
and the proof is complete.
\end{proof}

It was shown in \cite{LP}*{Lemma 3.1} that, for a nondecreasing $g\in \C^r[a, b]$, $r\in\N$,  the polynomial  $L_{r,h}(g,\cdot)$ is also nondecreasing on $[a, a+h]$ provided that $h<b-a$ is sufficiently small depending on $f$. Note that this statement also is valid (and is trivial) if $r=0$.

\begin{lemma}[\cite{LP}*{Lemma 3.1}]               \label{lem1}
Let $r\in\N_0$ and let $g\in \C^r[a,b]$  be  nondecreasing on $[a,b]$. Then there is a number $H>0$, such that for all
$h\in(0,H)$ the polynomials $L_{r,h}(g,\cdot)$ are   nondecreasing on $[a,a+h]$.
\end{lemma}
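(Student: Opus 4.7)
\medskip
\noindent\textbf{Proof plan for \lem{lem1}.}
The plan is to write $L:=L_{r,h}(g,\cdot)$ in the convenient form
\[
L(x)=T_r(g;x,a)+c\,(x-a)^{r+1},
\]
where $T_r(g;\cdot,a)$ is the Taylor polynomial of $g$ at $a$ of degree $r$; the constant $c$ is determined by the interpolation condition $L(a+h)=g(a+h)$, which gives
\[
c=\frac{g(a+h)-T_r(g;a+h,a)}{h^{r+1}}=\frac{R_r}{h^{r+1}},
\]
where $R_r$ is the standard Taylor remainder of order $r$. Differentiating,
\[
L'(x)=T_{r-1}(g';x,a)+(r+1)\,c\,(x-a)^{r}.
\]
So the goal reduces to showing $L'(x)\ge 0$ on $[a,a+h]$ for $h$ sufficiently small (depending on $g$).

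The case $r=0$ is trivial, since then $L'(x)\equiv (g(a+h)-g(a))/h\ge 0$, so assume $r\ge 1$. The integral form of the remainder gives $|R_r|\le (h^r/r!)\,\w_1(g^{(r)},h;[a,b])$, hence
\[
(r+1)\,|c|\,(x-a)^{r}\;\le\;\frac{r+1}{r!}\,h^{r-1}\,\w_1(g^{(r)},h;[a,b]),\qquad x\in[a,a+h],
\]
a bound that tends to $0$ with $h$ when $r\ge 2$, and equals $(r+1)\w_1(g',h;[a,b])/r!\to 0$ when $r=1$. This is the ``error'' term that must be absorbed by the dominant part of $T_{r-1}(g';x,a)$.

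The decisive step is to exploit monotonicity of $g$ to get a positive lower bound on $T_{r-1}(g';x,a)$. Let $q$ denote the smallest index in $\{0,1,\dots,r-1\}$ with $g^{(q+1)}(a)\ne 0$, if such an index exists. A brief Taylor expansion of $g$ at $a$ shows that, since $g$ is nondecreasing on $[a,b]$, the first nonvanishing derivative $g^{(q+1)}(a)$ must be strictly positive. If no such $q$ exists, then $T_{r-1}(g';x,a)\equiv 0$ and
\[
L'(x)=(r+1)\,\frac{g(a+h)-g(a)}{h^{r+1}}(x-a)^{r}\ge 0
\]
on $[a,a+h]$, and we are done. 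Otherwise, factor
\[
T_{r-1}(g';x,a)=(x-a)^{q}\Biggl[\frac{g^{(q+1)}(a)}{q!}+\sum_{k=q+1}^{r-1}\frac{g^{(k+1)}(a)}{k!}(x-a)^{k-q}\Biggr].
\]
For $h$ small the bracket exceeds $g^{(q+1)}(a)/(2q!)>0$, and therefore $T_{r-1}(g';x,a)\ge (x-a)^{q}\,g^{(q+1)}(a)/(2q!)$.

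It remains to show that this positive contribution dominates the correction. Using the bound on $|c|$ above,
\[
(r+1)\,|c|\,(x-a)^{r}\le (x-a)^{q}\cdot\frac{r+1}{r!}\,h^{r-q-1}\,\w_1(g^{(r)},h;[a,b]),
\]
and since $r-q-1\ge 0$ and $\w_1(g^{(r)},h;[a,b])\to 0$ as $h\to 0$, we may choose $H=H(g,r)>0$ so small that the factor multiplying $(x-a)^{q}$ is less than $g^{(q+1)}(a)/(2q!)$ for all $h\in(0,H)$. Combining the two estimates yields $L'(x)\ge 0$ on $[a,a+h]$, as required. The main obstacle is the delicate balancing when $g'(a)=0$; the key insight making the balance work is that monotonicity forces the first nonzero higher-order coefficient of the Taylor expansion of $g$ at $a$ to be strictly positive, providing a Taylor-type lower bound of exact order $(x-a)^{q}$ that strictly beats the error of order $(x-a)^{r}$ times a vanishing factor.
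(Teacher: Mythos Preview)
Your argument is correct. The decomposition $L_{r,h}(g,x)=T_r(g;x,a)+c\,(x-a)^{r+1}$ and the analysis of $L'$ via the first nonvanishing Taylor coefficient of $g'$ at $a$ are sound; the crucial observation that monotonicity of $g$ forces $g^{(q+1)}(a)>0$ (via the Peano remainder for $g'\in\C^{r-1}$) is justified, and the bound $(r+1)|c|(x-a)^r\le (x-a)^q\cdot\frac{r+1}{r!}\,h^{r-q-1}\w_1(g^{(r)},h;[a,b])$ follows from the integral form of the Taylor remainder and $(x-a)^{r-q}\le h^{r-q}$. Since $r-q-1\ge 0$ and $\w_1(g^{(r)},h)\to 0$, the balance works exactly as you claim, and the degenerate case $g'(a)=\dots=g^{(r)}(a)=0$ is handled cleanly.

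As for comparison with the paper: the present paper does not prove \lem{lem1} at all---it is quoted verbatim from \cite{LP}*{Lemma 3.1} and only the trivial case $r=0$ is commented on. Your self-contained proof is therefore an addition rather than an alternative; it is essentially the natural Taylor-based argument one would expect for such a statement, and it has the virtue of making explicit how $H$ depends on $g$ (through $g^{(q+1)}(a)$, the higher derivatives at $a$, and the modulus of continuity of $g^{(r)}$).
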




A trivial consequence of \lem{lem1} is the following result.

\begin{corollary}\label{lem11} Let $f\in \C^r[a,b]$, be convex on $[a,b]$. Then there is a number $H>0$, such that for all
$h\in(0,H)$ the polynomials ${\L}_{r,h}(f,\cdot)$ are convex on $[a,a+h]$.
\end{corollary}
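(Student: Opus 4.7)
The plan is to reduce this directly to \lem{lem1} by passing from $f$ to its derivative. Since $f\in\C^r[a,b]$ is convex, the function $g:=f'$ belongs to $\C^{r-1}[a,b]$ and is nondecreasing on $[a,b]$; since $r\in\N$, the index $r-1$ lies in $\N_0$, so the hypotheses of \lem{lem1} (applied with $r$ replaced by $r-1$ and with $g$ in place of $g$) are met.

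Accordingly, \lem{lem1} produces a number $H>0$ (depending on $f'$, hence on $f$) such that for every $h\in(0,H)$ the Lagrange--Hermite polynomial $L_{r-1,h}(g,\cdot)=L_{r-1,h}(f',\cdot)$ is nondecreasing on $[a,a+h]$. By the very definition
\[
\L_{r,h}(f,x)=\int_a^x L_{r-1,h}(f',t)\,dt+f(a),
\]
differentiation yields $\bigl(\L_{r,h}(f,\cdot)\bigr)'(x)=L_{r-1,h}(f',x)$, so this derivative is nondecreasing on $[a,a+h]$. Equivalently, $\L_{r,h}(f,\cdot)$ is convex on $[a,a+h]$, which is the desired conclusion.

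There is no real obstacle here: the corollary is essentially a one-line translation of the monotone statement \lem{lem1} via the correspondence $f \text{ convex}\iff f' \text{ nondecreasing}$, combined with the fact that $\L_{r,h}(f,\cdot)$ is built precisely as the antiderivative of $L_{r-1,h}(f',\cdot)$ normalised so that $\L_{r,h}(f,a)=f(a)$. The only thing to notice is that the $H$ provided by \lem{lem1} depends on $f'$ and hence on $f$, which is consistent with the statement of the corollary.
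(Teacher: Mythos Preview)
Your argument is correct and is exactly the ``trivial consequence of \lem{lem1}'' that the paper intends: apply \lem{lem1} to $g=f'\in\C^{r-1}[a,b]$, which is nondecreasing by convexity of $f$, and then observe that $\L_{r,h}(f,\cdot)'=L_{r-1,h}(f',\cdot)$. There is nothing to add.
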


By considering $\widetilde f (x):= f(a+b-x)$ instead of $f$ we also get analogous statements and interpolatory estimate near the endpoint $b$ instead of $a$.

%
%
%
%

%

Thus,  denoting  $\widetilde\L_{r, h}(f,x):=\L_{r,  h}(g,a+b-x)$, where $g(x):=f(a+b-x)$, we can summarize the above results as follows.

\begin{lemma}\label{lem112}
Let $r\in\N$, and let $f\in \C^r[a,b]$  be convex on $[a,b]$. Then there is a number $H>0$, such that for all
$h, \widetilde h \in(0,H)$  there are  polynomials $\L_{r,h}(f,\cdot)$ and $\widetilde\L_{r,  \widetilde h}(f,\cdot)$ of degree $\le r+1 $, such that
\begin{itemize}
\item[(i)]
 ${\L}_{r,h}(f,\cdot)$ is convex on $[a,a+h]$ and  $\widetilde{\L}_{r, \widetilde h}(f,\cdot)$ is convex on $[b- \widetilde h,b]$,
 \item[(ii)]
$\ds
|f(x)-{\L}_{r,h}(f,x)|\le c(r) (x-a)^r \lw_2(f^{(r)},x;[a,a+h])$, $ x\in[a,a+h]$,
 \item[(iii)]
$\ds
|f(x)-\widetilde{\L}_{r, \widetilde h}(f,x)|\le c(r)(b-x)^r\rw_2(f^{(r)},x;[b- \widetilde h,b])$, $x\in[b- \widetilde h,b]$,
\item[(iv)]
$\ds {\L}_{r,h}'(f,a+h)=f'(a+h)$ and $\widetilde{\L}_{r, \widetilde h}'(f,b- \widetilde h)=f'(b- \widetilde h)$.
\end{itemize}
\end{lemma}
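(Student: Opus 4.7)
The plan is to observe that Lemma \ref{lem112} is really a compact restatement of Lemma \ref{newlem1} and Lemma \ref{lem1}, together with elementary properties of the operator $\L_{r,h}$. So the proof is mostly a matter of assembling these ingredients and checking the reflection for the right endpoint.

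First I would verify parts (ii) and (iii). Part (ii) is precisely the conclusion of Lemma \ref{newlem1} applied to $f$ on $[a,a+h]$, so nothing more needs to be said. For part (iii), I would introduce the reflected function $g(x):=f(a+b-x)$, which lies in $\C^r[a,b]$ and is convex there (since convexity is preserved under the reflection $x\mapsto a+b-x$). Applying Lemma \ref{newlem1} to $g$ on $[a,a+\widetilde h]$ and then unwinding the change of variable $y=a+b-x$ converts the bound $(x-a)^r\lw_2(g^{(r)},x;[a,a+\widetilde h])$ into $(b-y)^r\rw_2(f^{(r)},y;[b-\widetilde h,b])$, giving (iii). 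The appearance of $\rw_2$ rather than $\lw_2$ comes from the fact that the reflection swaps ``distance from the left endpoint'' with ``distance from the right endpoint''.

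Next I would handle part (iv). By definition $L_{r-1,h}(f',a+h)=f'(a+h)$, and since ${\L}_{r,h}(f,x)=\int_a^x L_{r-1,h}(f',t)\,dt+f(a)$, the fundamental theorem of calculus gives ${\L}_{r,h}'(f,a+h)=L_{r-1,h}(f',a+h)=f'(a+h)$. The identity $\widetilde\L_{r,\widetilde h}'(f,b-\widetilde h)=f'(b-\widetilde h)$ follows from the same reasoning after the reflection.

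Finally, part (i) is where Lemma \ref{lem1} enters. Since $f$ is convex on $[a,b]$, $f'\in\C^{r-1}[a,b]$ is nondecreasing. Applying Lemma \ref{lem1} with the function $g:=f'$ and the order parameter $r-1$ in place of $r$, we obtain a threshold $H>0$ such that for all $h\in(0,H)$ the Lagrange--Hermite polynomial $L_{r-1,h}(f',\cdot)$ is nondecreasing on $[a,a+h]$. Its antiderivative $\L_{r,h}(f,\cdot)$ is therefore convex on $[a,a+h]$. Applying the same reasoning to $g(x)=f(a+b-x)$ produces a threshold (which we take to be the minimum of the two) that simultaneously works for $\widetilde\L_{r,\widetilde h}(f,\cdot)$ on $[b-\widetilde h,b]$. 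The main (and essentially only) subtle point is to note that Lemma \ref{lem1} is stated with a given $r\in\N_0$ applied to a function in $\C^r$; here we apply it with the shifted index $r-1\in\N_0$ to $f'\in\C^{r-1}$, which is allowed since $r\in\N$, and this makes the case $r=1$ (where $f'$ is merely continuous) fit within the scope of the lemma.
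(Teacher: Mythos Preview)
Your proposal is correct and mirrors exactly what the paper does: Lemma~\ref{lem112} is presented there as a summary of Lemma~\ref{newlem1}, Corollary~\ref{lem11} (which is itself the ``trivial consequence'' of Lemma~\ref{lem1} you spell out for part~(i)), the definition of $\L_{r,h}$ (for part~(iv)), and the reflection $g(x)=f(a+b-x)$ for the right-endpoint statements. Your careful remark about applying Lemma~\ref{lem1} with the shifted index $r-1\in\N_0$ to $f'\in\C^{r-1}$ is precisely the content behind Corollary~\ref{lem11}.
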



\sect{Proof of \thm{splinepositive}}\label{sec4}

It   suffices to prove this theorem for $[a,b]=[0,1]$, since we can then get the general result by applying a linear transformation. Additionally, by subtracting a linear polynomial interpolating $f$ at the endpoints we can assume that $f(0)=f(1)=0$. It is also clear that we can assume that $f$ is not a constant function, and so, because of its convexity, $f(x)<0$, for all $x\in (0,1)$. Now, denote $M:= \norm{f}{[0,1]} >0$, and let
 $x_* \in(0,1)$ be such that $f(x_*)=\min_{x\in[0,1]}f(x) = -M$.

Suppose now that a positive number $H_1<\min\{x_*,1-x_*\}$ is so small that
\[
\max\{ -f(H_1), -f(1-H_1)\} < M/2
\]
and
\be\label{small}
4c_0H_1^r\w_2(f^{(r)},H_1;[0,1])< M ,
\ee
where $c_0$ is the maximum of constants $c=c(r)$ from inequalities (ii) and (iii) in  \lem{lem112}.
Now, let $H$ be the number from \lem{lem112}, and without loss of generality, we assume that $H \le H_1$.

Suppose that a partition $X=\{x_j\}_{j=0}^n$ of $[0,1]$ satisfies \ineq{22}, and set $h:=x_{1}$ and $\widetilde h:=1-x_{n-1}$.

We are now ready to construct the  piecewise polynomial $s\in S(X,r+2)$ that yields \thm{splinepositive}.
  First, let
\[
s(x):=\begin{cases}
\L_{r,h}(f,x),& \quad\text{if}\quad x\in[0,x_{1}),\\
\widetilde{\L}_{r,\widetilde h}(f,x),& \quad\text{if}\quad x\in (x_{n-1},1],
\end{cases}
\]
where $\L_{r,h}$ and $\widetilde{\L}_{r,\widetilde h}$ are the   polynomials from \lem{lem112}, and note that estimates (ii) and (iii) of \lem{lem112}  immediately imply \ineq{end} and \ineq{ends}.

Now, suppose that $\sigma\in S(X,r+2)$ is the piecewise polynomial  from  \lem{lemLS1}. Note that we cannot simply define $s$ to be $\sigma$ on $[x_1, x_{n-1}]$ because $s$ would then  be   possibly   discontinuous at $x_1$ and $x_{n-1}$, because polynomials $\L_{r,h}$ and $\widetilde{\L}_{r,\widetilde h}$ do not necessarily interpolate $f$ at $x_1$ and $x_{n-1}$, respectively.

We are now going to show how to overcome this difficulty.

Set
\[
\delta :=\L_{r,h}(f,x_{1})-f(x_{1}),\quad \widetilde \delta :=\widetilde\L_{r,\widetilde h}(f,x_{n-1})-f(x_{n-1}),\quad\text{and}\quad \widehat \delta :=\delta-\widetilde \delta,
\]
and note that by virtue of \ineq{small} and \ineq{needpr} estimates (ii) and (iii) in \lem{lem112} imply that
\[
|\delta|<M/4\quad\text{and}\quad|\widetilde \delta|<M/4,
\]
so that $|\widehat \delta|<M/2$.

Denote by $l$ the tangent line to $f$ at $x_{1}$, and by $\widetilde l$ the tangent line to $f$ at $x_{n-1}$. Then we have
\begin{align}\label{x1}
f(x_{n-1})-l(x_{n-1})&=f(x_{n-1})-f(x_*)+f(x_*)-l(x_*)+l(x_*)-l(x_{n-1})\\
&>f(x_{n-1})-f(x_*)\ge f(1-H_1)-f(x_*)
> M/2 ,\nonumber
\end{align}
and similarly
\begin{align}\label{xn-1}
f(x_{1})-\widetilde l(x_{1})&=f(x_{1})-f(x_*)+f(x_*)-\widetilde l(x_*)+\widetilde l(x_*)-\widetilde l(x_{1})\\
&>f(x_{1})-f(x_*)\ge f(H_1)-f(x_*)
> M/2 .\nonumber
\end{align}

To define $s$ on $[x_{n-1},x_1]$ we consider two cases: $\widehat \delta\ge 0$ and $\widehat \delta<0$.

\medskip

\noindent {\bf Case 1:  $\widehat \delta\ge0$}

\medskip

In this case, we define
\[
s(x):=\lambda(\sigma(x)-l(x))+l(x)+\delta, \quad x\in[x_1,x_{n-1}] ,
\]
where
\[
\lambda:= 1- \frac{ \widehat \delta}{f(x_{n-1})-l(x_{n-1})}.
%
\]
It is straightforward to check that
\[
s(x_{1})={\L}_{r,h}(f,x_{1})  \andd s(x_{n-1})=\widetilde{\L}_{r,\widetilde h}(f,x_{n-1}) ,
\]
and so $s$ is continuous on $[0,1]$.

Now,  since by \ineq{x1}, $0<\lambda\le1$, it follows from (iv) of \lem{lem112}
 that
\[
s'(x_{1}+)=\lambda(\sigma'(x_{1}+)-f'(x_{1}))+f'(x_{1})\ge f'(x_{1})=s'(x_{1}-)
\]
and, since $f'$ is nondecreasing,
\[
s'(x_{n-1}-)=\lambda\sigma'(x_{n-1}-)+(1-\lambda)f'(x_{1})\le f'(x_{n-1})=s'(x_{n-1}+),
\]
and so $s$ is a convex function on $[0,1]$.

Since $\sigma-l$ is a nondecreasing nonnegative function on  $[x_{1},x_{n-1}]$, we also have, for $x\in[x_{1},x_{n-1}]$,
\begin{align*}
\sigma(x)-s(x)&=\sigma(x)-\lambda(\sigma(x)-l(x))-l(x)-\delta\\
&=(1-\lambda)(\sigma(x)-l(x))-\delta\ge -\delta
\end{align*}
and
\begin{align*}
\sigma(x)-s(x)&=  
(1-\lambda)(\sigma(x)-l(x))-\delta\\
&\le(1-\lambda)(\sigma(x_{n-1})-l(x_{n-1})) -\delta \\
& = \frac{ \widehat \delta}{f(x_{n-1})-l(x_{n-1})} (\sigma(x_{n-1})-l(x_{n-1})) - \delta \\
& =-\widetilde \delta .
\end{align*}

Hence,
\[
|f(x)-s(x)|\le|f(x)-\sigma(x)|+|\delta|+|\widetilde \delta|,\quad  x \in[x_{1},x_{n-1}],
\]
and, together with estimates (ii) and (iii) of \lem{lem112} and  \ineq{omeg2},
this proves \ineq{inner}.

\medskip

\noindent {\bf Case 2:  $\widehat \delta<0$}

\medskip

In this case, we define
\[
s(x):=\widetilde \lambda(\sigma(x)-\widetilde l(x))+\widetilde l(x)+\widetilde \delta, \quad x\in[x_{1},x_{n-1}] ,
\]
where
\[
\widetilde\lambda:= 1+ \frac{\widehat \delta}{f(x_{1})-\widetilde l(x_{1})},
%
\]
and we proceed as above using  \ineq{xn-1} instead of \ineq{x1}. This completes the proof. \qed

\begin{bibsection}
\begin{biblist}


\bib{DS}{book}{
author={Dzyadyk, V. K.},
author={Shevchuk, I. A.},
title={Theory of Uniform Approximation of Functions by Polynomials},
publisher={Walter de Gruyter},
place={Berlin},
date={2008},
pages={xv+480},
}

\bib{GLSW}{article}{
author={Gonska, H. H.},
author={Leviatan, D.},
author={Shevchuk, I. A.},
author={Wenz, H. -J.},
title={Interpolatory pointwise estimates for polynomial
approximations},
journal={Constr. Approx.},
volume={16},
date={2000},
pages={603--629},
 }

\bib{K-convex}{article}{
   author={Kopotun, K. A.},
   title={Pointwise and uniform estimates for convex approximation of
   functions by algebraic polynomials},
   journal={Constr. Approx.},
   volume={10},
   date={1994},
   number={2},
   pages={153--178},
}

\bib{K}{article}{
   author={Kopotun, K. A.},
   title={Simultaneous approximation by algebraic polynomials},
   journal={Constr. Approx.},
   volume={12},
   date={1996},
   number={1},
   pages={67--94},
}

\bib{KLPS}{article}{
   author={Kopotun, K. A.},
   author={Leviatan, D.},
   author={Prymak, A.},
   author={Shevchuk, I. A.},
   title={Uniform and pointwise shape preserving approximation by algebraic
   polynomials},
   journal={Surv. Approx. Theory},
   volume={6},
   date={2011},
   pages={24--74},
}


\bib{KLS1}{article}{
   author={Kopotun, K. A.},
   author={Leviatan, D.},
   author={Shevchuk, I. A.},
   title={Interpolatory pointwise estimates for monotone polynomial
   approximation},
   journal={J. Math. Anal. Appl.},
   volume={459},
   date={2018},
   number={2},
   pages={1260--1295},
}

\bib{L}{article}{
author={Leviatan, D.},
title={Pointwise estimates for convex polynomial approximation},
journal={Proc. Amer. Math. Soc.},
volume={98},
date={1986},
pages={471--474},
}

\bib{LP}{article}{
author={Leviatan, D.},
author={Petrova, I. L.},
title={Interpolatory estimates in monotone piecewise polynomial approximation},
journal={J. Approx. Theory},
volume={223},
date={2017},
pages={1--8},
}

\bib{LP1}{article}{
author={Leviatan, D.},
author={Petrova, I. L.},
journal={J. Approx. Theory},
title={Corrigendum},
volume={228},
date={2018},
pages={79--80},
}


\bib{LS}{article}{
author={Leviatan, D.},
author={Shevchuk, I. A.},
title={Coconvex polynomial approximation},
journal={J. Approx. Theory},
volume={121},
date={2003},
pages={100--118},
}

\bib{petr}{article}{
   author={Petrova, T. O.},
   title={A counterexample to convex interpolation approximation},
   language={Ukrainian, with Ukrainian summary},
   conference={
      title={Theory of the approximation of functions and related problems
      (Ukrainian)},
   },
   book={
      series={Pr. Inst. Mat. Nats. Akad. Nauk Ukr. Mat. Zastos.},
      volume={35},
      publisher={Nats\=\i onal. Akad. Nauk Ukra\"\i ni, \=Inst. Mat., Kiev},
   },
   date={2002},
   pages={107--112},
}

 \bib{S}{book}{
   author={Shevchuk, I. A.},
   title={Polynomial approximation and traces of functions continuous on a segment},
   publisher={Naukova Dumka, Kiev},
    language={Russian},
   date={1992}
}

\end{biblist}
\end{bibsection}

\end{document}